\title{Optimal eigenvalue estimates for the Robin Laplacian on Riemannian manifolds}
\author{Alessandro Savo}
\newtheorem{defi}{Definition} 
\newtheorem{theorem}[defi]{Theorem}
\newtheorem{lemme}[defi]{Lemma}
\newtheorem{lemma}[defi]{Lemma}
\newtheorem{cor}[defi]{Corollary}
 \newcommand{\twosystem}[2]{\left\{\begin{aligned} &#1\\ &#2\end{aligned}\right.}
\newcommand{\threesystem}[3]{\left\{ \begin{aligned}&#1\\ &#2\\&#3\end{aligned}\right.}
\newcommand{\threearray}[3]{\begin{aligned}&{#1}\\&{#2}\\&{#3}\end{aligned}}
\newcommand{\nero}{\smallskip$\bullet\quad$\rm}
\newcommand{\parte}[1]{\smallskip\noindent {\rm#1)}\,\,}
\newcommand{\scal}[2]{\langle{#1},{#2}\rangle}
\newcommand{\abs}[1]{\lvert{#1}\rvert}
\newcommand{\reals}{{\bf R}}
\newcommand{\sphere}[1]{{\bf S}^{#1}}
\newcommand{\real}[1]{{\bf R}^{#1}}
\newcommand{\bd}{\partial}
\newcommand{\derive}[2]{\dfrac{\bd #1}{\bd#2}}
\newcommand{\matrice}{\begin{pmatrix}}
\newcommand{\ok}{\end{pmatrix}}
\def\<{\left\langle}
\def\>{\right\rangle}
\begin{document}

\subjclass[2010]{58J50, 58J32, 35P15}
\keywords{Laplacian with Robin boundary conditions, eigenvalue estimates, domain monotonicity, McKean inequality}
\footnote{Research partially supported by Istituto Nazionale di Alta Matematica and GNSAGA of Italy.}

\address{Alessandro Savo, {\rm Dipartimento SBAI, Sezione di Matematica,
Sapienza Universit\`a di Roma,
Via Antonio Scarpa 16,
00161 Roma, Italy}}
 
 \email{alessandro.savo@uniroma1.it}

\maketitle
\large

\begin{abstract} We consider the first eigenvalue $\lambda_1(\Omega,\sigma)$ of the Laplacian with Robin boundary conditions on a compact Riemannian manifold $\Omega$ with smooth boundary, $\sigma\in\reals$ being the Robin boundary  parameter. 
When $\sigma>0$ we give a positive, sharp lower bound of $\lambda_1(\Omega,\sigma)$ in terms of an associated one-dimensional problem depending on the geometry through  a lower bound of the Ricci curvature of $\Omega$, a lower bound of the mean curvature of $\bd\Omega$ and the inradius.  When the boundary parameter is negative, the lower bound becomes an upper bound. 
In particular, explicit bounds for mean-convex Euclidean domains are obtained, which improve known estimates. 

Then, we extend a monotonicity result for $\lambda_1(\Omega,\sigma)$ obtained in  Euclidean space by Giorgi and Smits \cite{GS}, to a class of manifolds of revolution which include all space forms of constant sectional curvature. As an application, we prove that $\lambda_1(\Omega,\sigma)$ is uniformly bounded below by 
$\frac{(n-1)^2}4$ for all bounded domains in the hyperbolic space of dimension $n$, provided that the boundary parameter $\sigma\geq\frac{n-1}{2}$ (McKean-type inequality). Asymptotics for large hyperbolic balls are also discussed. \footnote

\end{abstract}

\large

\section{Introduction} 

\subsection{Definition and some known facts} Let $(\Omega^n,g)$ be a compact Riemannian manifold of dimension $n$ with smooth boundary $\bd\Omega$, and let $\Delta$ be the Laplacian associated to the metric $g$. The sign convention is that, on $\real n$:
$$
\Delta=-\sum_{k=1}^n\dfrac{\bd^2}{\bd x_k^2}.
$$
 We are interested in the first eigenvalue of the Robin problem:
\begin{equation}\label{robinproblem}
\twosystem
{\Delta u=\lambda u\quad\text{on}\quad\Omega}
{\derive uN=\sigma u\quad\text{on}\quad\bd\Omega}
\end{equation}
where $\sigma\in\reals$ is a parameter and $N$ is the inner unit normal.
The eigenvalues form a discrete sequence diverging to infinity:
$$
\lambda_1(\Omega, \sigma)<\lambda_2(\Omega,\sigma)\leq\dots;
$$
it is known that the first eigenvalue $\lambda_1(\Omega,\sigma)$ is simple and that any first eigenfunction does not change sign, so that we can assume that it is positive. 
 
\smallskip

The first eigenvalue $\lambda_1(\Omega,\sigma)$ models heat diffusion with absorbing ($\sigma>0$) or radiating ($\sigma<0$) boundary; it can also be seen as the fundamental tone of an elastically supported membrane. 

\smallskip

\subsection{Some features  of $\lambda_1(\Omega,\sigma)$} It is immediately seen that for $\sigma=0$ we recover the classical Neumann problem; in particular $\lambda_1(\Omega,0)=0$, the associated eigenfunctions being the constants. 
Hence we can assume $\sigma\ne 0$. The Rayleigh min-max principle reads:
\begin{equation}\label{minmax}
\lambda_1(\Omega,\sigma)=\inf_{u\in H^1(\Omega)}\dfrac
{\int_{\Omega}\abs{\nabla u}^2+\int_{\bd\Omega}\sigma u^2}{\int_{\Omega}u^2}
\end{equation}
and one can see easily that $\lambda_1(\Omega,\sigma)$ is positive for all $\sigma>0$, and negative when $\sigma<0$. 
Moreover $\lambda_1(\Omega,\sigma)$ is an increasing function of $\sigma$ which tends to $\lambda_1(\Omega,\infty)\doteq\lambda_1^D(\Omega)$ (the first Dirichlet eigenvalue of $\Omega$) when $\sigma\to +\infty$. In particular:
$$
\lambda_1(\Omega,\sigma)<\lambda_1^D(\Omega)
$$
for all $\sigma\in\reals$. 

\nero Problem \eqref{robinproblem} continues to make sense and admits a discrete spectrum when $\sigma$ is a continuous function on $\bd\Omega$, and not just a constant. The min-max principle \eqref{minmax} makes clear that then:
$$
\lambda_1(\Omega, \inf_{\bd\Omega}\sigma)\leq \lambda_1(\Omega, \sigma)\leq \lambda_1(\Omega, \sup_{\bd\Omega}\sigma).
$$
However in this paper we tacitly assume that $\sigma$ is a real constant.

\smallskip

The behaviour when the boundary parameter $\sigma\to -\infty$ is quite interesting. In that case the first eigenfunction concentrates near the boundary and, for domains in $\real n$ having $C^{\infty}$-smooth boundary, one has the following asymptotic expansion as $\sigma\to -\infty$:
$$
\lambda_1(\Omega,\sigma)= -\sigma^2+(n-1)H_{\rm max}\sigma+o(\sqrt{\sigma}),
$$
where $H_{\rm max}$ denotes the maximum value of the mean curvature of $\bd\Omega$. This was first proved by Pankrashkin \cite{P}  in dimension $2$ and later generalized by Pankrashkin and Popoff  \cite{PP}. The presence of corners affects the first term (see \cite{LP}). 

\smallskip

Domain monotonicity is an essential feature of the Dirichlet problem: if $\Omega_1\subseteq\Omega_2$ then $\lambda_1^D(\Omega_1)\geq \lambda_1^D(\Omega_2)$. This is an important tool in estimating eigenvalues. As observed in \cite{GS} domain monotonicity does not hold in full generality when $\sigma<+\infty$, even for  convex  domains. However, it does hold in Euclidean space $\real n$ when the outer domain is a ball (see \cite{GS} Theorem 1). We will in fact extend the argument in \cite{GS}  to prove a similar monotonicity result for a certain class of revolution manifolds, in particular, for the other space forms 
${\bf H}^n$ and $\sphere n$. This will be applied to generalize the classical McKean inequality \cite{McK} to the Robin Laplacian.

\subsection{Some known eigenvalue estimates} When $\sigma>0$, a Faber-Krahn type inequality has been proved by Bossel \cite{Bos} for domains in $\real 2$;  the result was extended to domains in $\real n$ by Daners \cite {D}. The conclusion is that, among all Euclidean domains with fixed volume, the ball minimises $\lambda_1(\Omega,\sigma)$ for any fixed $\sigma >0$. 

When $\sigma<0$, it was conjectured by Bareket \cite{Bar} that the ball would be, instead, a maximiser. As shown in \cite{FNT} this is true for domains which are close to a ball. But in fact the Bareket conjecture is false in general, as shown by Freitas and Krejcirik \cite{FK},  who showed that when $\sigma$ is large negative annuli with the same volume have larger first eigenvalue. In dimension $2$, they also showed that there exists a critical parameter $\sigma^{\star}<0$, depending only on the area,  such  that for any $\sigma\in [\sigma^{\star},0]$ the ball is a maximiser. 

Finally, let us mention the following explicit upper and lower bounds for convex domains in $\real n$, proved by Kovarik in \cite{Kov} :
\begin{equation}\label{kovarikintro}
\dfrac{\sigma}{4R+4R^2\sigma}\leq\lambda_1(\Omega,\sigma)\leq \dfrac{2K_n\sigma}{R+R^2\sigma},
\end{equation}
where $K_n$ is an explicit constant. Here $R$ is the {\it inradius} of $\Omega$, that is, the largest radius of a ball included in $\Omega$. Note that the lower bound in \eqref{kovarikintro} is sometimes much better than the Faber Krahn inequality (think of a convex domain which has small inradius and fixed volume). By passing to the limit as $\sigma\to\infty$ the lower bound becomes:
\begin{equation}\label{kovarikd}
\lambda_1^D(\Omega)\geq\dfrac{1}{4R^2};
\end{equation}
the author observes in Remark 4.6 of  \cite{Kov} that, due to the method used (Hardy inequality), \eqref{kovarikd} cannot be sharp; in fact the sharp bound in terms of the inradius would be
\begin{equation}\label{liyau}
\lambda_1^D(\Omega)\geq\dfrac{\pi^2}{4R^2},
\end{equation}
as proved by Hersch in \cite{H}.  The lower bound \eqref{liyau} was later shown to hold for a wider class of Riemannian manifolds in \cite{LY} and \cite{K2} (and later by the author in \cite{S2}, by different methods).  We will in fact prove a sharp lower bound (resp. upper bound) for all $\sigma>0$ (resp. $\sigma<0$) in the Riemannian case, by adapting the method of Laplacian comparison  to the Robin boundary conditions; when applied to convex Euclidean domains, this will improve the lower bound in \eqref{kovarikintro} and yield in the limit the sharp estimate \eqref{liyau}.

\smallskip

The scope of this paper is twofold: we first  prove a comparison theorem for a general Riemannian manifold (see Theorem \ref{compintro} and Theorem 
\ref{zerocurvature}) and then we prove a monotonicity result for a large class of revolution manifolds (Theorem \ref{dmintro}).  Both these methods will produce sharp bounds (in particular, a McKean-type inequality, Theorem \ref{mckeanintro}). 

\smallskip

The paper is structured as follows. In Section \ref{main} we state our main results and in Section \ref{pre} we prove some preliminary facts. 
Section \ref{scomparison} is devoted to the proof of the comparison theorem, while in Sections \ref{monotonicity} and \ref{mckean} we prove domain monotonicity and the McKean-type inequality. Finally in the Appendix we describe the model domains for our comparison theorem. 


\section{Main results}\label{main}

\subsection{Comparison theorem} We will compare $\lambda_1(\Omega,\sigma)$ with the first eigenvalue of a one-dimensional problem on the interval $[0,R]$, where $R$ is the inradius of $\Omega$:
\begin{equation}\label{onedintro}
\threesystem
{u''+\dfrac{\Theta'}{\Theta}u'+\lambda u=0}
{u'(0)=\sigma u(0)}
{u'(R)=0}
\end{equation}
This problem carries a weight $\Theta=\Theta(r)$ depending explicitly on the geometry of $\Omega$, as follows. We say that $\Omega$ has {\it curvature data $(K,H)$} if:

\nero the Ricci curvature of $\Omega$ is bounded below by $(n-1)K$,

\nero the mean curvature of $\bd\Omega$ is bounded below by $H$.

\smallskip

We stress that $K$ and $H$ may assume any real value. Our convention on the mean curvature is the following. Let $S$ be the shape operator of the immersion of $\bd\Omega$ into $\Omega$, with respect to the inner unit normal $N$: this is the self-adjoint operator acting on $T\bd\Omega$ and defined by
$
S(X)=-\nabla_XN,
$
for all $X\in T\bd\Omega$. Then the mean curvature is 
$$
\mathcal H=\dfrac{1}{n-1}{\rm tr}S.
$$
The sign convention is such that $\mathcal H$ is positive, and equal to $\frac 1R$, on the boundary of the  ball of radius $R$ in $\real n$.
As usual, we denote by $R$ the inradius of $\Omega$. Introduce the function on $[0,R]$:
\begin{equation}\label{sk}
s_k(r)=\threesystem
{\dfrac{1}{\sqrt K}\sin(r\sqrt K), \quad\text{if $K>0$},}
{r\quad\text{if $K=0$},}
{\dfrac{1}{\sqrt {\abs K}}\sin(r\sqrt {\abs K}), \quad\text{if $K<0$}.}
\end{equation}
We now define what we will call the {\it weight function} $\Theta:[0,R]\to \reals$ by:
\begin{equation}\label{theta}
\Theta(r)=\Big(s'_K(r)-Hs_k(r)\Big)^{n-1}.
\end{equation}
Note that $\Theta$ depends on $K$ and $H$, and that $\Theta(0)=1$. As a consequence of Theorem A in \cite{K} (see also Proposition 14 in \cite{S2})
we have that $\Theta$ is positive on $[0,R)$, and moreover $\Theta(R)=0$ if and only if $\Omega$ is a geodesic ball in the space form $M_K$, that is, the simply connected manifold with constant sectional curvature $K$. 

Here is a general comparison theorem. 

\begin{theorem}\label{compintro} Let $\Omega$ be a compact manifold with smooth boundary having curvature data $(K,H)$ and inradius $R$.  If $\sigma>0$, then:
$$
\lambda_1(\Omega,\sigma)\geq \lambda_1(R,\Theta,\sigma),
$$
where $ \lambda_1(R,\Theta,\sigma)$ is the first eigenvalue of problem \eqref{onedintro}, and $\Theta$ is defined in \eqref{theta}. If $\sigma<0$ the inequality is reversed:
$$
\lambda_1(\Omega,\sigma)\leq \lambda_1(R,\Theta,\sigma).
$$
\end{theorem}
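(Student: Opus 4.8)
The plan is to use the distance function to the boundary, $\rho(x) = \mathrm{dist}(x, \partial\Omega)$, as the bridge between the $n$-dimensional Robin problem and the one-dimensional model problem \eqref{onedintro}. Let $\phi$ be a first eigenfunction of the one-dimensional problem on $[0,R]$, which by standard Sturm--Liouville theory may be taken positive on $[0,R)$; its sign on the whole interval will matter for the direction of the inequality. The key idea of Laplacian comparison adapted to Robin conditions is to compare the test function $u = \phi\circ\rho$ (or an eigenfunction-weighted version of it) against the Rayleigh quotient \eqref{minmax}. I would first record the Laplacian comparison estimate: on the set where $\rho$ is smooth, $\Delta\rho \geq -\frac{\Theta'(\rho)}{\Theta(\rho)}$ (equivalently $\leq$, with the inequality sign coming from the curvature data $(K,H)$ bounding Ricci and the mean curvature of $\partial\Omega$), where the reference comes from the focal/tube comparison behind the positivity statement for $\Theta$ quoted from \cite{K} and \cite{S2}. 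One also needs that $\rho \leq R$ everywhere, which is exactly the definition of the inradius, and that $|\nabla\rho| = 1$ a.e.

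The core computation is then to plug $u = \phi(\rho)$ into the numerator of \eqref{minmax}. We have $|\nabla u|^2 = \phi'(\rho)^2$, and by the coarea formula
\[
\int_\Omega |\nabla u|^2 = \int_0^R \phi'(r)^2 \Big(\int_{\rho = r} \, d\mathrm{vol}_{n-1}\Big)\, dr.
\]
The boundary term is $\int_{\partial\Omega}\sigma u^2 = \sigma\,\phi(0)^2\,|\partial\Omega|$. To close the argument one needs to control the $(n-1)$-volume of the level sets $\{\rho = r\}$; the function $\Theta$ is precisely designed so that $r\mapsto \Theta(r)$ dominates (resp.\ is dominated by) the normalized level-set volume — this is where the hypothesis that $\partial\Omega$ has mean curvature $\geq H$ and $\mathrm{Ric}\geq (n-1)K$ enters, via the first variation of area and the Riccati/Heintze--Karcher estimate. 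After integrating by parts in $r$ against the one-dimensional equation (using $\phi''+\frac{\Theta'}{\Theta}\phi' = -\lambda_1(R,\Theta,\sigma)\phi$ and the boundary conditions $\phi'(0)=\sigma\phi(0)$, $\phi'(R)=0$), the numerator and denominator should organize into $\lambda_1(R,\Theta,\sigma)\int_\Omega u^2$ plus an error term with a definite sign determined by $\sigma$ and by the monotonicity of the level-set volume against $\Theta$. Since the sign of $\phi$ and of $\sigma$ both flip the relevant inequalities, the case $\sigma>0$ gives the lower bound and $\sigma<0$ the reversed one; I would present the $\sigma>0$ case in detail and note that $\sigma<0$ is formally identical with all inequalities reversed.

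The main obstacle I anticipate is twofold. First, the distance function $\rho$ is only Lipschitz, not smooth, with the cut locus a measure-zero but geometrically delicate set; one must justify the coarea computation and the integration by parts either by working on $\Omega\setminus \mathrm{Cut}(\partial\Omega)$ and checking that the cut-locus contributions have the right sign (they do, by the standard argument that the distributional Laplacian of $\rho$ has a nonpositive singular part pointing the favorable way), or by a smoothing/approximation argument. Second, and more substantively, the test function $u=\phi(\rho)$ may not be admissible directly if $\phi'(R)\neq 0$ would cause trouble — but the Neumann condition $\phi'(R)=0$ at the inradius is exactly what makes $\phi(\rho)$ a legitimate $H^1$ competitor even though $\rho$ has a "ridge" at distance $R$; I would emphasize that the boundary conditions of \eqref{onedintro} are chosen precisely to match the geometry ($\sigma$-Robin at $r=0$ mirroring $\partial\Omega$, Neumann at $r=R$ mirroring the interior ridge). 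Verifying that the error term genuinely has the claimed sign — which amounts to the monotonicity of the quotient of the level-set volume by $\Theta$, together with a sign analysis of $\phi\phi'$ — is the technical heart, and is where the positivity of $\Theta$ on $[0,R)$ from \cite{K}, \cite{S2} gets used.
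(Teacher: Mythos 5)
There is a genuine gap: the Rayleigh-quotient strategy points in the wrong direction for $\sigma>0$. The min--max principle \eqref{minmax} yields, for \emph{any} admissible test function $w$, only $\lambda_1(\Omega,\sigma)\leq \mathrm{RQ}(w)$; so plugging in $u=\phi\circ\rho$ can at best produce an \emph{upper} bound on $\lambda_1(\Omega,\sigma)$. Working through your own computation confirms this: writing $\bar\lambda=\lambda_1(R,\Theta,\sigma)$, integrating $\int_\Omega|\nabla u|^2$ in normal coordinates, integrating by parts in $r$, and using the ODE for $\phi$ gives
\[
\mathrm{RQ}(\phi\circ\rho)=\bar\lambda
-\frac{\displaystyle\int\phi\,\phi'\,\theta\Bigl(\tfrac{\theta'}{\theta}-\tfrac{\Theta'}{\Theta}\Bigr)}{\displaystyle\int\phi^2\theta}
+(\text{nonnegative cut-locus boundary term}).
\]
The Heintze--Karcher inequality \eqref{hk} gives $\tfrac{\theta'}{\theta}\leq\tfrac{\Theta'}{\Theta}$, and for $\sigma>0$ the monotonicity lemma gives $\phi\phi'\geq0$; the error terms are therefore \emph{nonnegative}, so the computation yields $\mathrm{RQ}(\phi\circ\rho)\geq\bar\lambda$, which combined with $\lambda_1(\Omega,\sigma)\leq\mathrm{RQ}(\phi\circ\rho)$ gives no relation at all between $\lambda_1(\Omega,\sigma)$ and $\bar\lambda$. (For $\sigma<0$ the signs flip and the argument does deliver the reversed inequality $\lambda_1(\Omega,\sigma)\leq\bar\lambda$, so your method is sound for that half of the statement.)

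The paper does not use $\phi\circ\rho$ as a Rayleigh test function but as a \emph{barrier}. Setting $v=\phi\circ\rho$, it shows directly that $\Delta v\geq\bar\lambda\, v$ in the distributional sense: on the regular set this follows from Lemma \ref{tech}b together with the Laplacian comparison $\Delta\rho\geq-\tfrac{\Theta'}{\Theta}\circ\rho$ and the sign $\phi'\geq0$, and the singular part supported on the cut-locus is a positive measure (Lemma \ref{prep}b), hence only helps. One then tests this inequality against the \emph{actual} positive first eigenfunction $f$ of the Robin problem, using Green's formula (Lemma \ref{tech}c); the boundary terms cancel exactly because $v$ and $f$ satisfy the same Robin condition $\partial_N(\cdot)=\sigma(\cdot)$ on $\partial\Omega$, giving $0\geq(\bar\lambda-\lambda_1(\Omega,\sigma))\int_\Omega fv$, whence $\bar\lambda\leq\lambda_1(\Omega,\sigma)$ since $f,v>0$. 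You correctly identified the pullback function, the need for the Neumann condition at $r=R$, the Laplacian comparison, and the cut-locus subtlety, but the step where you compare against the eigenvalue must go through the Green's-formula pairing with $f$, not through the variational characterization.
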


In other words one has, for all $\sigma\in\reals$:
$$
\abs{\lambda_1(\Omega,\sigma)}\geq \abs{\lambda_1(R,\Theta,\sigma)}.
$$

For the proof, see section \ref{scomparison}. The estimate is sharp in every dimension: see section \ref{sharpness} below. 
The eigenvalue $\lambda_1(R,\Theta,\sigma)$ is always positive   and,
when $\sigma>0$, the theorem gives a positive lower bound for any compact Riemannian manifold with boundary. For the Dirichlet problem ($\sigma=+\infty$) the result is due to Kasue \cite{K2}.

A particularly simple situation is when $K=H=0$, so that $\Theta(r)=1$. We obtain the following fact.

\begin{theorem} \label{zerocurvature} Let $\sigma>0$. Assume that both the Ricci curvature of $\Omega$ and the mean curvature of $\bd\Omega$ are non-negative. Let $R$ be the inradius of $\Omega$. Then:
$$
\lambda_1(\Omega,\sigma)\geq \lambda_1([0,2R],\sigma),
$$
where on the right we have the first Robin eigenvalue of the interval $[0,2R]$.
The estimate is sharp in any dimension. Precisely, if $\Omega$ is any flat cylinder (that is, a Riemannian product  $ [0,2R]\times \Sigma^{n-1}$, where $\Sigma^{n-1}$ is a closed Riemannian manifold of dimension $n-1$) then equality holds.

\smallskip

If $\sigma<0$ the inequality is reversed and sharp as well. 
\end{theorem}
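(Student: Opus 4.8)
The plan is to deduce the statement from Theorem \ref{compintro} applied with the curvature data $(K,H)=(0,0)$, after identifying the resulting one-dimensional eigenvalue with the first Robin eigenvalue of the symmetric interval $[0,2R]$.

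First I would observe that the hypotheses --- $\mathrm{Ric}_\Omega\geq 0$ and $\mathcal H_{\bd\Omega}\geq 0$ --- say exactly that $\Omega$ has curvature data $(0,0)$, since $0=(n-1)\cdot 0$ and $0=H$. For $K=0$ we have $s_0(r)=r$ and $s_0'(r)\equiv 1$, so by \eqref{theta} the weight function is $\Theta(r)=(1-0\cdot r)^{n-1}\equiv 1$ on $[0,R]$, and problem \eqref{onedintro} collapses to the constant-coefficient problem
$$
u''+\lambda u=0\ \text{on}\ [0,R],\qquad u'(0)=\sigma u(0),\qquad u'(R)=0.
$$
I would then check that the first eigenvalue $\lambda_1(R,\Theta,\sigma)$ of this problem equals $\lambda_1([0,2R],\sigma)$ by even reflection about $r=R$: the first Robin eigenfunction on $[0,2R]$ is positive and, the problem being invariant under $r\mapsto 2R-r$ and its first eigenvalue simple, it is symmetric about $R$, hence has vanishing derivative at $R$ and restricts to a solution of the displayed problem; conversely, the even extension across $r=R$ of a first eigenfunction of the displayed problem is $C^1$ (the matching condition being precisely $u'(R)=0$), solves the Robin problem on $[0,2R]$, and is positive, so realizes the first eigenvalue there. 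Thus the two first eigenvalues coincide, and Theorem \ref{compintro} with $(K,H)=(0,0)$ gives $\lambda_1(\Omega,\sigma)\geq\lambda_1([0,2R],\sigma)$ for $\sigma>0$, with the reversed inequality for $\sigma<0$.

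For sharpness I would take $\Omega=[0,2R]\times\Sigma^{n-1}$ with the product metric and $\Sigma$ closed and flat, so that $\Omega$ is flat, its boundary $(\{0\}\cup\{2R\})\times\Sigma$ is totally geodesic (hence of vanishing mean curvature with respect to the inner normal), the curvature data is $(0,0)$, and the inradius equals $R$. Since $L^2(\Omega)$ factors as $L^2([0,2R])\otimes L^2(\Sigma)$ and the Robin Laplacian of the product respects this splitting, its spectrum is the set of sums $\nu+\mu$ with $\nu$ ranging over the Robin eigenvalues of $[0,2R]$ and $\mu\geq 0$ over the eigenvalues of $\Sigma$; the least such sum is $\lambda_1([0,2R],\sigma)+0$, attained from the constant function on $\Sigma$. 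Hence $\lambda_1(\Omega,\sigma)=\lambda_1([0,2R],\sigma)$ for either sign of $\sigma$, which is the asserted equality case.

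I expect no real obstacle: the statement is essentially a specialization of Theorem \ref{compintro}. The only points requiring care are the identification $\lambda_1(R,\Theta,\sigma)=\lambda_1([0,2R],\sigma)$ (one must verify the reflected function lies in $H^1$ and satisfies the Robin condition at the second endpoint) and, in the sharpness example, the separation-of-variables description of the product spectrum --- in particular that no mixed sum $\lambda_1([0,2R],\sigma)+\mu$ with $\mu>0$ can undercut $\lambda_1([0,2R],\sigma)$, which is immediate from $\mu\geq 0$.
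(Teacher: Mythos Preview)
Your proposal is correct and follows essentially the same route as the paper: specialize Theorem~\ref{compintro} to $(K,H)=(0,0)$ so that $\Theta\equiv 1$, identify the resulting one-dimensional eigenvalue with $\lambda_1([0,2R],\sigma)$ via the symmetry $r\mapsto 2R-r$ of the Robin problem on $[0,2R]$, and handle the cylinder case by separation of variables. One minor remark: your restriction to \emph{flat} $\Sigma$ is unnecessary for the equality computation (your own argument only uses that the Laplace spectrum of the closed manifold $\Sigma$ is non-negative), and the theorem as stated asserts equality for arbitrary closed $\Sigma^{n-1}$.
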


Explicit evaluation of the right-hand side using the Becker-Starck inequality implies the following  estimate. 

\begin{cor}\label{convex} If both the Ricci curvature of $\Omega$ and the mean curvature of $\bd\Omega$ are non-negative (in particular, for mean-convex Euclidean domains) we have, if $\sigma>0$:
$$
\lambda_1(\Omega,\sigma)>
\dfrac{\pi^2\sigma}{\pi^2R+4R^2\sigma},
$$
while if $\sigma<0$, then $\lambda_1(\Omega,\sigma)< -\sigma^2$.
\end{cor}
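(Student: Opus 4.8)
The plan is to deduce Corollary~\ref{convex} from Theorem~\ref{zerocurvature} by explicitly estimating the one-dimensional Robin eigenvalue $\lambda_1([0,L],\sigma)$ with $L=2R$. Since the weight is trivial ($\Theta\equiv 1$ when $K=H=0$), the one-dimensional problem \eqref{onedintro} on $[0,2R]$ reduces to $u''+\lambda u=0$ with $u'(0)=\sigma u(0)$ and $u'(2R)=0$. First I would solve this ODE explicitly: writing $\lambda=\mu^2$ with $\mu>0$ (recall $\lambda_1(R,\Theta,\sigma)>0$), the Neumann condition at the right endpoint forces $u(r)=\cos(\mu(2R-r))$ up to scaling, and the Robin condition at $0$ gives the transcendental equation $\mu\tan(2R\mu)=\sigma$, i.e. $\mu\sin(2R\mu)=\sigma\cos(2R\mu)$. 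The first eigenvalue corresponds to the smallest positive root; when $\sigma>0$ this root lies in $\bigl(0,\tfrac{\pi}{4R}\bigr)$ (so that $2R\mu\in(0,\tfrac{\pi}{2})$), and when $\sigma<0$ one instead gets $\lambda_1<0$ with a $\tanh$ relation, of the form $\nu\tanh(2R\nu)=-\sigma$ where $\lambda_1=-\nu^2$.

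For the case $\sigma>0$, the key step is to bound the root $\mu$ of $2R\mu\tan(2R\mu)=2R\sigma$ from below. Setting $t=2R\mu\in(0,\tfrac{\pi}{2})$, this is $t\tan t = 2R\sigma$, and I want a lower bound for $t$ in terms of $2R\sigma$; equivalently an upper bound for the function $t\mapsto t\tan t$ on $(0,\tfrac{\pi}{2})$. The Becker–Stark inequality, invoked in the statement, gives precisely $\tan t < \dfrac{8t}{\pi^2-4t^2}$ for $t\in(0,\tfrac{\pi}{2})$, hence $t\tan t < \dfrac{8t^2}{\pi^2-4t^2}$. Imposing that the right-hand side equals $2R\sigma$ at $t=2R\mu$ and solving the resulting rational inequality for $t^2$ yields $t^2 > \dfrac{2\pi^2 R\sigma}{8+8R\sigma}$, i.e. $4R^2\mu^2 > \dfrac{\pi^2 R\sigma}{4+4R\sigma}$, which rearranges to $\lambda_1([0,2R],\sigma)=\mu^2 > \dfrac{\pi^2\sigma}{\pi^2 R + 4R^2\sigma}$. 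Combining with Theorem~\ref{zerocurvature} gives the claimed strict lower bound; strictness is automatic since Becker–Stark is a strict inequality on the open interval.

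For $\sigma<0$, the argument is shorter: the eigenvalue equation $\nu\tanh(2R\nu)=-\sigma$ (with $\lambda_1=-\nu^2$, $\nu>0$) and the elementary bound $\tanh x<1$ give $\nu>-\sigma$, hence $\lambda_1([0,2R],\sigma)=-\nu^2<-\sigma^2$. Since Theorem~\ref{zerocurvature} states that in the $\sigma<0$ case the inequality $\lambda_1(\Omega,\sigma)\le\lambda_1([0,2R],\sigma)$ holds, we conclude $\lambda_1(\Omega,\sigma)<-\sigma^2$.

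The main obstacle is really just the bookkeeping in the $\sigma>0$ case: one must be careful that the smallest positive root of $t\tan t=2R\sigma$ indeed lies in $(0,\tfrac{\pi}{2})$ — which holds because $t\tan t$ is continuous, increasing, vanishes at $0$ and blows up to $+\infty$ as $t\to\tfrac{\pi}{2}^-$ — and that applying the Becker–Stark inequality at this root goes in the direction that produces a lower bound on $t$ rather than an upper bound. Once the monotonicity of $t\mapsto t\tan t$ on $(0,\tfrac{\pi}{2})$ is noted, the inequality $t\tan t < \frac{8t^2}{\pi^2-4t^2}$ translates directly into a lower bound for the root, and the rest is elementary algebra.
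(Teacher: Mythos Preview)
Your overall strategy is exactly the paper's, but there are two concrete errors in the $\sigma>0$ case that make the argument collapse.

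\medskip

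\textbf{Wrong one-dimensional problem.} The quantity $\lambda_1([0,2R],\sigma)$ in Theorem~\ref{zerocurvature} is the first Robin eigenvalue of the interval $[0,2R]$ with Robin conditions at \emph{both} endpoints, namely $u'(0)=\sigma u(0)$ and $u'(2R)=-\sigma u(2R)$. By symmetry about $R$ this equals the Robin--Neumann eigenvalue on $[0,R]$, i.e.\ problem~\eqref{onedintro} with $\Theta\equiv 1$ on $[0,R]$. You instead solved the Robin--Neumann problem on $[0,2R]$, which is a different (smaller) eigenvalue. Your transcendental equation should be $x\tan x=R\sigma$ with $x=R\sqrt{\lambda}$, not $t\tan t=2R\sigma$ with $t=2R\sqrt{\lambda}$.

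\medskip

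\textbf{Wrong Becker--Stark constant.} The Becker--Stark inequalities read
\[
\dfrac{8x}{\pi^2-4x^2}<\tan x<\dfrac{\pi^2 x}{\pi^2-4x^2},\qquad x\in\Big(0,\tfrac{\pi}{2}\Big).
\]
You wrote $\tan t<\dfrac{8t}{\pi^2-4t^2}$, which is the \emph{lower} bound with the inequality reversed---hence false. With the correct upper bound and the correct equation $x\tan x=R\sigma$, one gets
\[
R\sigma=x\tan x<\dfrac{\pi^2 x^2}{\pi^2-4x^2}\ \Longrightarrow\ x^2>\dfrac{\pi^2 R\sigma}{\pi^2+4R\sigma}\ \Longrightarrow\ \lambda_1=\dfrac{x^2}{R^2}>\dfrac{\pi^2\sigma}{\pi^2 R+4R^2\sigma},
\]
which is the desired bound. (Your final ``rearranges to'' step is also algebraically incorrect: from $4R^2\mu^2>\dfrac{\pi^2 R\sigma}{4+4R\sigma}$ one obtains $\mu^2>\dfrac{\pi^2\sigma}{16R+16R^2\sigma}$, not the claimed expression; this is a symptom of the two upstream errors.)

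\medskip

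Your $\sigma<0$ argument via $\tanh<1$ is essentially the paper's (which phrases it as $\coth>1$), and works once the interval is corrected to $[0,R]$.
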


(The proof is given in section \ref{proofconvex}). The estimate applies to any mean-convex (in particular, convex) domain in $\real n$, and it improves the bound \eqref{kovarikintro} for all $\sigma>0$ and $R$. As $\sigma\to +\infty$ it gives the expected sharp bound:
$$
\lambda_1^D(\Omega)\geq\dfrac{\pi^2}{4R^2}.
$$


\subsection{Sharpness, method of proof}\label{sharpness} Theorem \ref{compintro} is sharp in all dimensions. In fact, for any $R>0$ and  for any curvature data $(K,H)$ we will construct a {\it model domain} $\bar\Omega\doteq\bar\Omega(K,H,R)$ of dimension $n$ with two boundary components: $\bd\bar\Omega=\Gamma_1\cup\Gamma_2$ such that :
$$
\lambda_1(\bar\Omega,\sigma)=\lambda_1(R,\Theta,\sigma)
$$
where on the left we have the first eigenvalue of $\bar\Omega$  with Robin conditions on $\Gamma_1$ and Neumann conditions on $\Gamma_2$.
For the definition of $\bar\Omega$ we refer to the Appendix. 

\smallskip

In some cases the model domain can be a ball in a space-form $M_K$ and we have an equality case:

\begin{theorem}\label{equalitycase} Let $\Omega$ be a domain with curvature data $(K,H)$, and assume one of the following three cases: a) $K>0$ and $H\in\reals$, b) $K=0$ and $H>0$, c) $K<0$ and $H>\sqrt{\abs K}$. Then:

\parte a There is a unique ball $\tilde\Omega$ in $M_K$ with mean curvature equal to $H$.

\parte b The radius $\tilde R$ of $\tilde\Omega$ satisfies $\tilde R\geq R$.

\parte c One has $\lambda_1(\Omega,\sigma)\geq\lambda_1(\tilde\Omega,\sigma)$ with equality if and only if $\Omega$ is isometric to $\tilde\Omega$.

\end{theorem}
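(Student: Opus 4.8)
\emph{Proof plan.} The plan is to reduce all three parts to one observation: that $\tilde R$ is exactly the first zero of the weight function $\Theta$ of \eqref{theta}. First I would note that a geodesic sphere of radius $\rho$ in the space form $M_K$ has constant mean curvature $\mathcal H(\rho)=s_K'(\rho)/s_K(\rho)$, which is $\sqrt K\cot(\rho\sqrt K)$, $1/\rho$ or $\sqrt{\abs K}\coth(\rho\sqrt{\abs K})$ according as $K>0$, $K=0$, $K<0$; in each case $\mathcal H$ is strictly decreasing, with image all of $\reals$ when $K>0$, image $(0,+\infty)$ when $K=0$, and image $(\sqrt{\abs K},+\infty)$ when $K<0$. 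Hence in each of the three cases a), b), c) the equation $\mathcal H(\rho)=H$ has exactly one solution $\rho=\tilde R$, which gives part a). Since $s_K'(r)-Hs_K(r)=s_K(r)\big(\mathcal H(r)-H\big)$ for $r>0$ and $s_K'(0)-Hs_K(0)=1$ by \eqref{sk}, the function $r\mapsto s_K'(r)-Hs_K(r)$ is positive on $[0,\tilde R)$ and zero at $\tilde R$, so $\tilde R$ is the first zero of $\Theta$.

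For part b), I would recall from the discussion after \eqref{theta} (Theorem A of \cite{K}, Proposition 14 of \cite{S2}) that $\Theta>0$ on $[0,R)$, $R$ the inradius of $\Omega$. If $R>\tilde R$, then $\Theta$ would vanish at the interior point $\tilde R\in(0,R)$, which is impossible; hence $\tilde R\geq R$. The same fact says $\Theta(R)=0$ exactly when $\Omega$ is a geodesic ball of $M_K$, whose radius then equals its inradius $R$; but $\Theta(R)=0$ gives $\mathcal H(R)=H$, so that ball has mean curvature $H$ and, by the uniqueness in part a), it is $\tilde\Omega$ and $R=\tilde R$. This yields the equivalences
$$
\Theta(R)=0\ \iff\ R=\tilde R\ \iff\ \Omega\cong\tilde\Omega.
$$

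For part c), take first $\sigma>0$. Theorem \ref{compintro} gives $\lambda_1(\Omega,\sigma)\geq\lambda_1(R,\Theta,\sigma)$, so the task reduces to proving
$$
\lambda_1(R,\Theta,\sigma)\ \geq\ \lambda_1(\tilde R,\Theta,\sigma)\ =\ \lambda_1(\tilde\Omega,\sigma),
$$
with the first inequality strict whenever $R<\tilde R$. The right-hand equality comes from separation of variables on $\tilde\Omega$: its first Robin eigenfunction is radial and, in the variable $r=\tilde R-\dist(\cdot,\text{centre})$, it solves \eqref{onedintro} on $[0,\tilde R]$ with weight $\big(s_K(\tilde R-r)/s_K(\tilde R)\big)^{n-1}$, which equals $\Theta$ precisely because $\mathcal H(\tilde R)=H$; the Robin condition falls at $r=0$ and smoothness at the centre becomes the Neumann condition at $r=\tilde R$ (as detailed in the Appendix). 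For the inequality, integrating \eqref{onedintro} against $u$ shows $\lambda_1(R,\Theta,\sigma)=\inf_u\big(\int_0^R\Theta(u')^2+\sigma u(0)^2\big)/\int_0^R\Theta u^2$ over $u\in H^1([0,R])$; given such a $u$, I would extend it to $[0,\tilde R]$ by the constant $u(R)$ on $[R,\tilde R]$. Then the numerator is unchanged and the denominator grows by $u(R)^2\int_R^{\tilde R}\Theta\geq0$, so the quotient does not increase and $\lambda_1(\tilde R,\Theta,\sigma)\leq\lambda_1(R,\Theta,\sigma)$. When $R<\tilde R$, applying this to a first eigenfunction $u$ of the $[0,R]$ problem — which is positive, so $u(R)>0$ (otherwise $u(R)=u'(R)=0$ forces $u\equiv0$) and $\int_R^{\tilde R}\Theta>0$ because $\tilde R$ is the first zero of $\Theta$ — makes the denominator increase strictly, so the inequality is strict. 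Hence equality in part c) forces $R=\tilde R$, i.e.\ $\Omega\cong\tilde\Omega$ by part b); the converse is obvious. For $\sigma<0$, Theorem \ref{compintro} gives $\lambda_1(\Omega,\sigma)\leq\lambda_1(R,\Theta,\sigma)$ instead, and a similar Rayleigh-quotient argument — keeping track of signs, since the quotient is negative — shows the one-dimensional eigenvalue is strictly increasing in the interval length, so $\lambda_1(\Omega,\sigma)\leq\lambda_1(\tilde\Omega,\sigma)$ with equality only for $\Omega\cong\tilde\Omega$; in either sign this says $\abs{\lambda_1(\Omega,\sigma)}\geq\abs{\lambda_1(\tilde\Omega,\sigma)}$.

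The hard part will not be the geometry — once $\tilde R$ is identified as the first zero of $\Theta$, parts a) and b) are bookkeeping on top of the already-quoted positivity and rigidity of $\Theta$. The one step needing genuine care is the \emph{strict} monotonicity of the one-dimensional Robin--Neumann eigenvalue in the interval length: for $\sigma>0$ it is the short extension-by-a-constant argument above, but for $\sigma<0$ the signs in the Rayleigh quotient must be handled carefully, and I expect strictness to be cleanest via the observation that a first eigenfunction of the longer interval has nowhere-vanishing derivative off the Neumann endpoint, hence cannot restrict to an eigenfunction of the shorter interval. I would also want to make sure that the degenerate one-dimensional problem on $[0,\tilde R]$ (where $\Theta(\tilde R)=0$) really is the Robin eigenvalue problem of the ball $\tilde\Omega$, since that identification is the bridge between the chain of inequalities and the geometric quantity $\lambda_1(\tilde\Omega,\sigma)$; that is exactly the content of the model-domain discussion in the Appendix.
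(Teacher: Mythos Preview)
Your proposal is correct and follows essentially the same route as the paper: both arguments chain Theorem \ref{compintro} with the monotonicity of $\lambda_1(R,\Theta,\sigma)$ in $R$ and the identification $\lambda_1(\tilde R,\Theta,\sigma)=\lambda_1(\tilde\Omega,\sigma)$, using Kasue's rigidity (Theorem A of \cite{K}) for the equality case. The only difference is that the paper simply invokes Lemma \ref{oned}(c) for the strict monotonicity step, whereas you reprove it via the extension-by-a-constant argument (which is in fact exactly how Lemma \ref{oned}(c) is established), and your extra care about $\sigma<0$ is not needed since the statement of Theorem \ref{equalitycase} is implicitly for $\sigma>0$.
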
 
For the proof, see the Appendix. 
The proof of Theorem \ref{compintro} is by Laplacian comparison, and is obtained by extending the methods in \cite{K2} and \cite{S2} to the Robin Laplacian. 


\subsection{Domain monotonicity on revolution manifolds}  As remarked before, domain monotonicity does not hold in full generality for the Robin Laplacian, and the first aim is here to extend the monotonicity result of \cite{GS} from Euclidean space to other manifolds. 

We focus here on the class of {\it revolution manifolds with pole $x_0$}: these are manifolds  $M\doteq [0,T]\times \sphere{n-1}$ ($T$ could be $+\infty$) with metric 
$$
g=dr^2+\Phi(r)^2g_{\sphere{n-1}},
$$
where $g_{\sphere{n-1}}$ is the canonical metric on the sphere $\sphere{n-1}$. Here $r\in [0,T]$ is the geodesic distance to the pole and $\Phi(r)$ is the {\it warping function}; this is a smooth, positive function on $[0,T]$ satisfying the conditions:
$$
\Phi(0)=\Phi''(0)=0, \quad \Phi'(0)=1,
$$
which are imposed in order to have a $C^2$-metric at the pole. However, if we make the stronger assumptions that $\Phi$ has vanishing even derivates at zero then the metric is $C^{\infty}$-smooth everywhere. 

\nero The geodesic ball centered at the pole of $M$ having radius $R\leq T$, that is, $B(x_0,R)$, is evidently a revolution manifold itself and will then be regarded as such. We also remark that the function
$$
H(r)\doteq\dfrac{\Phi'}{\Phi}(r)
$$
expresses the mean curvature of the geodesic sphere $\bd B(x_0,r)$ with respect to the inner unit normal $N=-\nabla r$.

\smallskip

Recall that the {\it space-form of constant curvature $K$}, denoted $M_K$, is : Euclidean space if $K=0$, the round sphere of radius $\frac{1}{\sqrt K}$ if $K>0$, and the hyperbolic space ${\bf H}^n_K$ of (constant) curvature $K$ if $K<0$. The space-form $M_K$ is a revolution manifold around any of its points, the warping function being $\Phi(r)=s_K(r)$. Precisely:
$$
\threearray
{\Phi(r)=r\quad\text{if}\quad K=0}
{\Phi(r)=\dfrac{1}{\sqrt{K}}\sin (r\sqrt{K})\quad\text{if}\quad K>0,}
{\Phi(r)=\dfrac{1}{\sqrt{-K}}\sinh (r\sqrt{-K})\quad\text{if}\quad K<0.}
$$

We will be interested in the situation where the warping function is log-concave, that is $(\log\Phi)''<0$. This is equivalent to asking that the mean curvature of $\bd B(x_0,r)$ is a decreasing function of $r$ (the distance to the pole).
 It is clear that the condition is satisfied by the warping function of all space-forms $M_K$.

\begin{theorem}\label{dmintro} Let $\Omega$ be a domain of a revolution manifold $M$ with pole $x_0$, whose warping function $\Phi$ is log-concave : $(\log\Phi)''<0$.   Assume that $\Omega\subseteq B(x_0,R)$. If $\sigma>0$ then
$$
\lambda_1(\Omega,\sigma)\geq \lambda_1(B(x_0,R),\sigma),
$$
while if $\sigma<0$ then the opposite inequality holds:
$$
\lambda_1(\Omega,\sigma)\leq \lambda_1(B(x_0,R),\sigma).
$$
In particular, the above monotonicity holds true in any space-form $M_K$.
\end{theorem}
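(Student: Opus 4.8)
The plan is to compare $\lambda_1(\Omega,\sigma)$ with $\mu\doteq\lambda_1(B,\sigma)$, $B\doteq B(x_0,R)$, by a Barta-type argument; the point is that, since $\Omega\subseteq B$, the first eigenfunction of $B$ restricts to a positive solution of $\Delta v=\mu v$ on all of $\Omega$. First I would record the structure of this model eigenfunction. As $B$ is rotationally symmetric about $x_0$ and $\lambda_1(B,\sigma)$ is simple with a non-sign-changing eigenfunction, the first eigenfunction $v$ of $B$ is radial, $v=v(r)>0$ on $\bar B$ (positive up to the boundary, by the Hopf lemma), and solves
\[
v''+(n-1)\frac{\Phi'}{\Phi}\,v'+\mu v=0 \ \text{ on }(0,R],\qquad v'(0)=0,\qquad -v'(R)=\sigma v(R).
\]
Since $\mu>0$ for $\sigma>0$ and $\mu<0$ for $\sigma<0$, rewriting the equation as $(\Phi^{n-1}v')'=-\mu\,\Phi^{n-1}v$ and integrating from $0$ (where $\Phi^{n-1}v'\to0$) shows that $v'<0$ on $(0,R]$ if $\sigma>0$, and $v'>0$ on $(0,R]$ if $\sigma<0$.

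The heart of the matter — and the only point where log-concavity of $\Phi$ enters — is the estimate
\[
\abs{v'(r)}\ \le\ \abs{\sigma}\,v(r)\qquad\text{for all }r\in[0,R],
\]
with equality at $r=R$. I would prove it by analysing the logarithmic derivative $\psi\doteq v'/v$, which is smooth on $(0,R]$, extends continuously to $r=0$ with $\psi(0)=0$, satisfies $\psi(R)=-\sigma$, and obeys the Riccati equation $\psi'=-\psi^2-(n-1)H\psi-\mu$ with $H\doteq\Phi'/\Phi$. The claim is that $\psi$ is strictly monotone on $[0,R]$ — decreasing if $\sigma>0$, increasing if $\sigma<0$; granting this, $\psi$ stays between its endpoint values $0$ and $-\sigma$, which is the desired bound. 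To see monotonicity, one first computes $\psi'(0^{+})=-\mu/n$, which has the correct sign; and if $r_1\in(0,R]$ were the \emph{first} zero of $\psi'$, then differentiating the Riccati equation and using $\psi'(r_1)=0$ gives $\psi''(r_1)=-(n-1)H'(r_1)\psi(r_1)$. Since $H'=(\log\Phi)''<0$ and $\psi$ keeps the constant sign of $-\sigma$ throughout $(0,R]$, it follows that $\psi''(r_1)$ has the same sign as $\psi'$ near $r=0$; hence $\psi'$ has opposite signs near $0$ and just to the left of $r_1$, so it must vanish somewhere in $(0,r_1)$ — contradicting the choice of $r_1$. Thus $\psi'$ never vanishes and $\psi$ is strictly monotone.

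It remains to run the comparison. For $\sigma>0$ I would use Barta's method: let $u>0$ be the first eigenfunction of $\Omega$, write $u=v\phi$ (legitimate since $v>0$ on $\bar\Omega$), and integrate by parts using $\Delta v=\mu v$ on $\Omega$ to get the exact identity
\[
\lambda_1(\Omega,\sigma)=\mu+\frac{1}{\int_\Omega u^2}\Big(\int_\Omega v^2\abs{\nabla\phi}^2+\int_{\bd\Omega}\big(\sigma v-\dotp{\nabla v}{N}\big)\,v\phi^2\Big),
\]
$N$ being the inner unit normal of $\Omega$. The first integral is nonnegative, and on $\bd\Omega$ one has $\dotp{\nabla v}{N}=v'\dotp{\nabla r}{N}\le\abs{v'}\le\sigma v$, because $v'<0$ and by the key estimate; hence the correction is $\ge0$ and $\lambda_1(\Omega,\sigma)\ge\mu$. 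For $\sigma<0$ I would instead use $v$ itself as a test function in the min-max quotient \eqref{minmax} for $\Omega$: the same integration by parts gives
\[
\lambda_1(\Omega,\sigma)\le\frac{\int_\Omega\abs{\nabla v}^2+\sigma\int_{\bd\Omega}v^2}{\int_\Omega v^2}=\mu+\frac{1}{\int_\Omega v^2}\int_{\bd\Omega}\big(\sigma v-\dotp{\nabla v}{N}\big)v,
\]
and now $\dotp{\nabla v}{N}=v'\dotp{\nabla r}{N}\ge-\abs{v'}\ge\sigma v$, because $v'>0$ and $\abs{v'}\le-\sigma v$; hence the correction is $\le0$ and $\lambda_1(\Omega,\sigma)\le\mu$. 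Finally, each space form $M_K$ is a revolution manifold about any of its points with warping function $s_K$, and $(\log r)''$, $\big(\log\sin(r\sqrt K)\big)''$, $\big(\log\sinh(r\sqrt{-K})\big)''$ are all negative, so $\Phi=s_K$ is log-concave and the last assertion follows.

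The main obstacle is the monotonicity of $\psi=v'/v$: it is precisely here that $(\log\Phi)''<0$ is used, and some care is needed at the pole $r=0$ (where $H\to\infty$ while $\psi\to0$) and in the sign bookkeeping at the first critical point of $\psi'$. Once that lemma is in place, the Barta identity together with the trivial bound $\abs{\dotp{\nabla r}{N}}\le1$ makes the two comparisons routine.
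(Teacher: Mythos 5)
Your proof is correct, and the heart of it coincides with the paper's: the estimate $\abs{v'(r)}\le\abs\sigma\,v(r)$, obtained from the Riccati equation for $v'/v$ and the sign of $(\log\Phi)''$, is exactly Lemma~\ref{oned}(d),(e) of the paper (there phrased, after the substitution $u(r)=v(R-r)$, as ``$u'/u$ has no interior extremum''), and both arguments then restrict the ball's radial eigenfunction to $\Omega$ and use the crude bound $\abs{\langle\nabla r,N\rangle}\le1$. Where you diverge is in how the comparison is closed. The paper makes a cleaner, sign-uniform observation: the restriction $\phi\vert_\Omega$ is itself a \emph{positive} eigenfunction of the Robin problem on $\Omega$ with the variable boundary parameter $\sigma^\star(x)=\phi^{-1}\partial\phi/\partial N$, hence it is the first eigenfunction, so $\lambda_1(B,\sigma)=\lambda_1(\Omega,\sigma^\star)$; since $\sigma^\star\le\sigma$ when $\sigma>0$ (and $\sigma^\star\ge\sigma$ when $\sigma<0$), monotonicity of $\lambda_1(\Omega,\cdot)$ in the boundary parameter finishes both cases in one stroke. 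You instead run Barta's identity (via the substitution $u=v\phi$) for $\sigma>0$ and a direct test-function estimate for $\sigma<0$; this is perfectly valid and produces the same boundary term $\int_{\bd\Omega}(\sigma v-\partial_N v)\,v\phi^2$, but requires two separate computations and a slightly heavier integration by parts. The variable-parameter formulation buys uniformity and brevity; the Barta formulation makes the nonnegative ``defect'' term $\int_\Omega v^2\abs{\nabla\phi}^2$ explicit, which could be useful if one wanted a quantitative stability estimate. One stylistic remark: in your monotonicity argument for $\psi$, the contradiction at the first zero $r_1$ of $\psi'$ is cleaner if stated as ``$\psi'<0$ on $(0,r_1)$ with $\psi'(r_1)=0$ forces $\psi''(r_1)\ge0$, while the Riccati derivative gives $\psi''(r_1)=-(n-1)H'(r_1)\psi(r_1)<0$'' — the detour through ``opposite signs near $0$ and to the left of $r_1$'' is a roundabout way of saying the same thing.
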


 For the proof, see section \ref{monotonicity}. When $M=\real n$ the result is due to Giorgi and Smits \cite{GS}.


\subsection{McKean-type inequality}
It is well-known that the first Dirichlet eigenvalue of any bounded domain in ${\bf H}^n$ satisfies the bound
$$
\lambda_1^D(\Omega)\geq \dfrac{(n-1)^2}{4},
$$
known as McKean inequality (see \cite{McK}). The remarkable fact here is that the inequality holds regardless of the size of $\Omega$ (volume, diameter, etc.). 
By domain monotonicity, which is valid in ${\bf H}^n$ thanks to Theorem \ref{dmintro}, and by explicit calculations for geodesic balls, we can extend McKean inequality to the Robin Laplacian, as follows. 

\begin{theorem}\label{mckeanintro} Let $\Omega$ be a domain in ${\bf H}^n$ and let $\sigma>0$. Then:
$$
\lambda_1(\Omega,\sigma)\geq\twosystem{\dfrac{(n-1)^2}{4}\quad\text{if}\quad \sigma\geq \dfrac{n-1}{2}}
{(n-1)\sigma-\sigma^2\quad\text{if}\quad 0<\sigma\leq\dfrac{n-1}{2}}
$$
If instead we assume $\sigma<0$, then
$
\lambda_1(\Omega,\sigma)\leq -\sigma^2+(n-1)\sigma.
$
\end{theorem}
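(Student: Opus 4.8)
The plan is to derive the bounds for a general domain $\Omega$ from the corresponding bounds for geodesic balls, using the domain monotonicity of Theorem~\ref{dmintro}, which applies in ${\bf H}^n$ because its warping function $\Phi(r)=\sinh r$ is log-concave. Fix a pole $x_0\in{\bf H}^n$; since $\Omega$ is bounded, $\Omega\subseteq B(x_0,R)$ for $R$ large. Theorem~\ref{dmintro} gives $\lambda_1(\Omega,\sigma)\geq\lambda_1(B(x_0,R),\sigma)$ when $\sigma>0$ and $\lambda_1(\Omega,\sigma)\leq\lambda_1(B(x_0,R),\sigma)$ when $\sigma<0$; applied to nested balls it also shows $R\mapsto\lambda_1(B(x_0,R),\sigma)$ is non-increasing for $\sigma>0$ and non-decreasing for $\sigma<0$. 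So it suffices to bound $\lambda_1(B(x_0,R),\sigma)$: from below by the claimed constant for every $R$ when $\sigma>0$, and to show it tends to $-\sigma^2+(n-1)\sigma$ as $R\to\infty$ when $\sigma<0$. Since $\lambda_1$ is simple, the first eigenfunction of $B(x_0,R)$ is rotationally invariant, so $\lambda_1(B(x_0,R),\sigma)$ is the bottom of the one-dimensional problem
$$
u''+(n-1)\coth r\cdot u'+\lambda u=0,\qquad u'(0)=0,\qquad u'(R)+\sigma u(R)=0,
$$
whose Rayleigh quotient is $\big(\int_0^R u'^2(\sinh r)^{n-1}\,dr+\sigma(\sinh R)^{n-1}u(R)^2\big)\big/\int_0^R u^2(\sinh r)^{n-1}\,dr$.

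For the lower bound when $\sigma>0$ I would use the elementary fact $\frac{d}{dr}(\sinh r)^{n-1}=(n-1)\coth r\,(\sinh r)^{n-1}\geq(n-1)(\sinh r)^{n-1}$. Integrating $\int_0^R u^2\,\frac{d}{dr}(\sinh r)^{n-1}\,dr$ by parts (the contribution at $r=0$ vanishes since $\sinh 0=0$) and using Cauchy--Schwarz gives $(n-1)A\leq C+2\sqrt{AB}$, where $A=\int_0^R u^2(\sinh r)^{n-1}$, $B=\int_0^R u'^2(\sinh r)^{n-1}$ and $C=(\sinh R)^{n-1}u(R)^2\geq 0$. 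Minimizing $(B+\sigma C)/A$ under this single constraint is a one-variable calculus exercise: the optimal value of $C$ is $n-1-2\sigma$, which is admissible (i.e.\ $\geq 0$) precisely when $\sigma\leq\frac{n-1}{2}$, yielding the minimum $(n-1)\sigma-\sigma^2$; while for $\sigma\geq\frac{n-1}{2}$ the minimum is attained at $C=0$ and equals $\frac{(n-1)^2}{4}$. This is exactly the asserted estimate, and it is where the critical value $\frac{n-1}{2}$ enters. (The very same computation, carried out directly on $\Omega$ with $A=\int_\Omega u^2$, $B=\int_\Omega|\nabla u|^2$, $C=\int_{\bd\Omega}u^2$ and the vector field $X=\nabla b$ for a Busemann function $b$ of ${\bf H}^n$ normalized so that $\operatorname{div}X=n-1$ and $|X|=1$, proves the $\sigma>0$ part without invoking monotonicity.)

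For $\sigma<0$ one needs an \emph{upper} bound, hence a test function. The cleanest choice is $\phi=e^{-\sigma b}$ on $\Omega$, with $b$ a Busemann function normalized so that $\operatorname{div}(\nabla b)=n-1$: a direct computation gives $\Delta\phi=(-\sigma^2+(n-1)\sigma)\phi$, and inserting $\phi$ into the Rayleigh quotient and integrating by parts leaves a boundary remainder $\sigma\int_{\bd\Omega}\phi^2\big(1+\langle\nabla b,N\rangle\big)$, which is $\leq 0$ because $\sigma<0$ and $\langle\nabla b,N\rangle\geq -1$; hence $\lambda_1(\Omega,\sigma)\leq -\sigma^2+(n-1)\sigma$. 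Equivalently, staying inside the ball picture, one substitutes $\lambda=\frac{(n-1)^2}{4}-\tau^2$ and $u=(\sinh r)^{-(n-1)/2}w$ in the one-dimensional problem above, obtaining $w''-\big(\tau^2+\tfrac{(n-1)(n-3)}{4}(\sinh r)^{-2}\big)w=0$ with $w(0)=0$ and $w'(R)+\big(\sigma-\tfrac{n-1}{2}\coth R\big)w(R)=0$; since the potential tends to $\tau^2$ and $\coth R\to 1$, the eigenvalue relation forces $\tau\to\frac{n-1}{2}-\sigma$ as $R\to\infty$ (for $n=3$ this is immediate from $w=\sinh(\tau r)$), so $\lambda_1(B(x_0,R),\sigma)$ increases to $\frac{(n-1)^2}{4}-\big(\tfrac{n-1}{2}-\sigma\big)^2=-\sigma^2+(n-1)\sigma$.

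The routine parts are the reduction to balls and the radial separation of variables. The one genuine obstacle — and the same point governs sharpness — is the case $\sigma<0$: one must either set up the Busemann test function with the correct sign, so that the boundary remainder has the right sign, or control the solutions of the transformed radial ODE (which for general $n$ are Legendre/conical functions) uniformly as $R\to\infty$ in order to pin down the limit $-\sigma^2+(n-1)\sigma$ and to verify that the approach is monotone. Minor care is also needed, in the $\sigma>0$ estimate, only at the final step, namely in carrying out the constrained minimization and recognizing the two regimes separated by $\sigma=\frac{n-1}{2}$; everything else is a direct calculation.
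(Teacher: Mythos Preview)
Your proposal is correct and reaches the same conclusions, but the argument is genuinely different from the paper's. The paper reduces to balls via Theorem~\ref{dmintro}, then uses the sign of $v'$ (coming from Lemma~\ref{oned}) together with $\coth r\geq 1$ to compare the radial ODE with the constant-coefficient problem $v''+(n-1)v'+\lambda v=0$ (Lemma~\ref{linear}), and finally solves that ODE explicitly (Lemma~\ref{preplemma}) to extract the two regimes and the $\sigma<0$ bound. You instead translate $\coth r\geq 1$ into the integral inequality $(n-1)A\leq C+2\sqrt{AB}$ via integration by parts and Cauchy--Schwarz, and then read off the bound by a constrained minimisation of the Rayleigh quotient; for $\sigma<0$ you plug in the Busemann test function $e^{-\sigma b}$ directly on $\Omega$, and the sign of the boundary remainder $\sigma\int_{\partial\Omega}\phi^2(1+\langle\nabla b,N\rangle)$ finishes the job without any appeal to balls or to monotonicity. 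Your route is more elementary---no ODE analysis, and the $\sigma<0$ case (and, via your Busemann aside, also the $\sigma>0$ case) becomes independent of Theorem~\ref{dmintro}---while the paper's ODE comparison has the advantage of being reusable: the same Lemma~\ref{preplemma} immediately yields the refined two-term asymptotics of Theorem~\ref{twotermasy}, which your variational inequality does not see.
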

For $\sigma>\frac{n-1}2$, the estimate is sharp, because if $B_R$ is any hyperbolic ball of radius $R$, we have:
$$
\lim_{R\to\infty}\lambda_1(B_R,\sigma)=\frac{(n-1)^2}{4}.
$$
This will be clear from the next Theorem \ref{twotermasy}.
Note that the lower bound is independent of $\sigma$ and also on $R$ when $\sigma$ is large enough. We will in fact refine the estimate for hyperbolic balls by taking into account the value $R$ of the radius.

In the case $\sigma=+\infty$ (Dirichlet problem) it was proved in \cite{S1}, Theorem 5.6, that
\begin{equation}\label{infinity}
\frac{(n-1)^2}{4}+\frac{\pi^2}{R^2}-\frac{4\pi^2}{(n-1)R^3}\leq \lambda^D_1(B_R)\leq 
\frac{(n-1)^2}{4}+\frac{\pi^2}{R^2}+\dfrac{C}{R^3},
\end{equation}
where $C=\frac{\pi^2(n^2-1)}{2}\int_0^{\infty}\frac{r^2}{\sinh^2r}\,dr$.

\smallskip

For the Robin problem, and for $\sigma$ sufficiently large, we obtain the following calculation.

\begin{theorem} \label{twotermasy} Let $B_R$ be the ball of radius $R$ in the hyperbolic space ${\bf H}^n$ and let $\lambda_1(B_R,\sigma)$ be the first eigenvalue of the Robin Laplacian with parameter $\sigma>\frac{n-1}2$. 

\smallskip Then there are positive constants $R_0,c_0$ depending only on $\sigma$ and $n$ such that, for all $R\geq R_0$ one has:
$$
\frac{(n-1)^2}{4}+\frac{\pi^2}{R^2}-\frac{c_0}{R^3}\leq \lambda_1(B_R,\sigma)\leq 
\frac{(n-1)^2}{4}+\frac{\pi^2}{R^2}+\dfrac{C}{R^3},
$$
where $C=\frac{\pi^2(n^2-1)}{2}\int_0^{\infty}\frac{r^2}{\sinh^2r}\,dr$. (The upper bound holds for all $R$ and $\sigma$).

\smallskip

Consequently, for all $\sigma\in (\frac{n-1}{2},\infty]$ one has the following two-term  asymptotic expansion as $R\to\infty$:
$$
\lambda_1(B_R,\sigma)\sim \frac{(n-1)^2}{4}+\frac{\pi^2}{R^2}+O(\frac{1}{R^3}).
$$
\end{theorem}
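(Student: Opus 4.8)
The goal is to pin down the first two terms in the large-$R$ asymptotics of $\lambda_1(B_R,\sigma)$ for hyperbolic balls when $\sigma>\frac{n-1}{2}$, matching (up to the error term) the Dirichlet asymptotics \eqref{infinity}. Since $B_R$ is a revolution manifold and the first eigenfunction is radial, the eigenvalue problem reduces to the one-dimensional Sturm--Liouville problem
\begin{equation*}
u''+(n-1)\coth(r)\,u'+\lambda u=0\quad\text{on }[0,R],\qquad u'(0)=0,\qquad u'(R)=\sigma u(R).
\end{equation*}
The standard substitution $u(r)=\big(\sinh r\big)^{-\frac{n-1}{2}}\,v(r)$ removes the first-order term and turns this into a Schr\"odinger-type equation
\begin{equation*}
-v''+q(r)\,v=\Big(\lambda-\tfrac{(n-1)^2}{4}\Big)v,\qquad q(r)=\frac{(n-1)(n-3)}{4\sinh^2 r}+\frac{(n-1)^2}{4}\cdot\frac{1}{\sinh^2 r},
\end{equation*}
more precisely $q(r)=\frac{n^2-1}{4\sinh^2 r}$ after collecting terms; write $\mu\doteq\lambda_1(B_R,\sigma)-\frac{(n-1)^2}{4}$, so that we must show $\mu\sim\frac{\pi^2}{R^2}+O(R^{-3})$. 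The boundary conditions transform into $v(0)=0$ (from regularity at the pole, since $\sinh^{-\frac{n-1}{2}}$ blows up) and a Robin-type condition at $R$ of the form $v'(R)=\big(\sigma-\tfrac{n-1}{2}\coth R\big)v(R)$; the hypothesis $\sigma>\frac{n-1}{2}$ guarantees that the effective boundary coefficient $\sigma-\tfrac{n-1}{2}\coth R$ is positive for all large $R$ (indeed bounded below by a positive constant), which is what makes the Dirichlet-type behavior emerge in the limit.

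\emph{Upper bound.} For the upper bound I would use the Rayleigh quotient \eqref{minmax} with an explicit radial test function, the natural choice being (the pullback of) $\phi(r)=\big(\sinh r\big)^{-\frac{n-1}{2}}\sin\!\big(\tfrac{\pi r}{R}\big)$, which vanishes at $r=0$ and at $r=R$ (so the boundary term drops out entirely) and is the exact Dirichlet ground state of the leading part of the operator. Plugging this into the transformed Rayleigh quotient gives $\mu\le \frac{\pi^2}{R^2}+\frac{\int_0^R q(r)\sin^2(\pi r/R)\,dr}{\int_0^R \sin^2(\pi r/R)\,dr}$ after an integration by parts; since $q(r)=\frac{n^2-1}{4\sinh^2 r}$ is integrable on $[0,\infty)$ and $\int_0^R\sin^2(\pi r/R)\,dr=R/2$, the perturbation term is at most $\frac{2}{R}\int_0^\infty\frac{n^2-1}{4\sinh^2 r}\,dr+O(R^{-2})$. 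A slightly sharper bookkeeping — using $\sin^2(\pi r/R)\le(\pi r/R)^2$ near $r=0$ — replaces the crude $\frac{1}{\sinh^2 r}$ bound by a factor $r^2$ and produces exactly the constant $C=\frac{\pi^2(n^2-1)}{2}\int_0^\infty\frac{r^2}{\sinh^2 r}\,dr$ advertised in the statement, giving $\mu\le\frac{\pi^2}{R^2}+\frac{C}{R^3}$; this argument needs no restriction on $\sigma$, which is why the statement records that the upper bound holds for all $R$ and $\sigma$.

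\emph{Lower bound.} For the lower bound I would argue by Sturm comparison on the transformed equation. Since $q\ge 0$, the eigenfunction $v$ of $-v''+qv=\mu v$ with $v(0)=0$ oscillates no faster than the solution of $-w''=\mu w$ with $w(0)=0$, i.e.\ $w(r)=\sin(\sqrt\mu\, r)$; combined with the boundary condition $v'(R)=\big(\sigma-\tfrac{n-1}{2}\coth R\big)v(R)$ with a \emph{positive} coefficient, a comparison-of-phases argument (the Pr\"ufer angle of $v$ stays below that of $w$) forces $\sqrt\mu\,R$ to be at least the first positive value at which $\sin$ satisfies the corresponding Robin condition, which is $>\tfrac\pi2$, so crudely $\mu\ge\frac{\pi^2}{4R^2}$ — enough to know $\mu\to 0$ like $R^{-2}$ but not yet the sharp constant $\pi^2$. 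To upgrade $\frac{\pi^2}{4}$ to $\pi^2$ one must exploit that the Robin coefficient $\sigma-\tfrac{n-1}{2}\coth R$ is not merely positive but \emph{large}: it tends to $\sigma-\tfrac{n-1}{2}\cdot 1>0$... wait, it tends to a finite positive constant, so one instead notes that $\sqrt\mu\to 0$ means $v(R)$ and $v'(R)$ decouple in the limit and the condition $v'(R)=cv(R)$ with $c$ bounded behaves, for the relevant solution $v\approx\sin(\sqrt\mu\,r)$, like $\sqrt\mu\cos(\sqrt\mu R)=c\sin(\sqrt\mu R)$, i.e.\ $\tan(\sqrt\mu R)=\sqrt\mu/c\to 0$, hence $\sqrt\mu R\to\pi$; perturbing this with the integrable potential $q$ contributes only $O(R^{-3})$ to $\mu$. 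Making this rigorous is the main obstacle: one needs effective (not just asymptotic) control on how the potential $q$ and the $R$-dependence of the boundary coefficient shift the lowest eigenvalue, which I would handle by writing $v=w+\text{(correction)}$ with $w$ the model solution, applying variation-of-parameters to bound the correction in $C^1[0,R]$ by $\int_0^\infty q<\infty$ times $\sqrt\mu$, and then reading off the eigenvalue condition to conclude $\mu=\frac{\pi^2}{R^2}+O(R^{-3})$ with an explicit constant $c_0$ depending only on $n$ and $\sigma$. Combining the two bounds yields the stated two-sided estimate for $R\ge R_0$, and letting $R\to\infty$ (using the Dirichlet case \eqref{infinity} to cover $\sigma=+\infty$) gives the two-term expansion $\lambda_1(B_R,\sigma)\sim\frac{(n-1)^2}{4}+\frac{\pi^2}{R^2}+O(R^{-3})$ uniformly for $\sigma\in(\tfrac{n-1}{2},\infty]$.
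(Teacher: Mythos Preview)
Your route via the Liouville substitution $u=(\sinh r)^{-(n-1)/2}v$ is legitimate and genuinely different from the paper's, but there is a sign slip that matters. With the inner-normal convention the radial boundary condition is $u'(R)=-\sigma u(R)$ (see \eqref{hypballs}), and after the substitution one obtains $v'(R)=\big(\tfrac{n-1}{2}\coth R-\sigma\big)v(R)$; the effective coefficient is therefore \emph{negative} for $\sigma>\tfrac{n-1}{2}$ and large $R$, not positive. This is not cosmetic: with a positive coefficient $c$ the model problem $-w''=\mu_0 w$, $w(0)=0$, $w'(R)=cw(R)$ has a negative first eigenvalue as soon as $cR>1$ (take $w=\sinh(\alpha r)$), so your comparison would be vacuous. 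With the correct sign $c<0$ the quadratic form is $\int_0^R(v')^2+\int_0^R qv^2+|c|\,v(R)^2$, and min-max immediately yields $\mu\ge\mu_0$; then $\cot(\sqrt{\mu_0}\,R)=c/\sqrt{\mu_0}\to-\infty$ forces $\sqrt{\mu_0}\,R\to\pi^-$, giving $\mu_0=\pi^2/R^2-O(R^{-3})$ with no need for Pr\"ufer angles or variation of parameters. What you call ``the main obstacle'' thus dissolves once the sign is right and you argue variationally.

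The paper's argument is more elementary and avoids the Liouville transform entirely. For the lower bound it uses $v'\le 0$ on $[0,R]$ (Lemma~\ref{oned}) together with $\coth r\ge 1$ to pass to the constant-coefficient problem $v''+(n-1)v'+\lambda v=0$, $v'(0)=0$, $v'(R)=-\sigma v(R)$ (Lemma~\ref{linear}), which is then analysed explicitly (Lemma~\ref{preplemma}\,c) to extract the stated $R_0$ and $c_0$. For the upper bound the paper simply invokes $\lambda_1(B_R,\sigma)\le\lambda_1^D(B_R)$ and the known Dirichlet estimate \eqref{infinity} from \cite{S1}; your Rayleigh-quotient computation with the test function $(\sinh r)^{-(n-1)/2}\sin(\pi r/R)$ is essentially how that Dirichlet bound is proved there, so you are reproducing that step rather than citing it. Both strategies work; the paper's is shorter and produces explicit constants with minimal machinery, while yours makes the shifted eigenvalue $\mu=\lambda-\tfrac{(n-1)^2}{4}$ and the integrable potential $q=\tfrac{n^2-1}{4\sinh^2 r}$ structurally visible.
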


We see that when the radius is large and the parameter $\sigma$ is greater than $\frac{n-1}2$, the eigenvalues $\lambda_1(B_R,\sigma)$ and $\lambda_1^D(B_R)$ are very very close;   in fact, the boundary parameter makes its appearance only in the remainder term,  which we find a bit surprising.  The constants $R_0$ and $c_0$ will be explicited in the proof. 

\smallskip

When $\sigma=+\infty$ (Dirichlet problem)  the lower bound in \eqref{infinity} was improved in \cite{Art} and the two-term expansion has been refined in Theorem 1.1 of \cite{Kri}. 

\smallskip

For simplicity we state the estimates in constant negative curvature $-1$. However the above estimates easily extend to arbitrary constant negative curvature, as follows: let $\Omega$ be any domain in ${\bf H}^n_{-\kappa^2}$, the hyperbolic space of constant curvature $-\kappa^2$ (we assume $\kappa>0$).  If $\sigma>0$ then
$$
\lambda_1(\Omega,\sigma)\geq\twosystem{\dfrac{(n-1)^2}{4}\kappa^2\quad\text{if}\quad \sigma\geq \dfrac{n-1}{2}\kappa}
{(n-1)\kappa\sigma-\sigma^2\quad\text{if}\quad 0<\sigma\leq\dfrac{n-1}{2}\kappa}
$$
If instead we assume $\sigma<0$, then
$
\lambda_1(\Omega,\sigma)\leq -\sigma^2+(n-1)\kappa\sigma.
$
Moreover, if $B_R$ denotes the ball of radius $R$ in ${\bf H}^n_{-\kappa^2}$, then one has a two-term asymptotic expansion as $R\to\infty$:
$$
\lambda_1(B_R,\sigma)\sim \frac{(n-1)^2}{4}\kappa^2+\frac{\pi^2}{R^2}+O(\frac{1}{R^3}).
$$
Observe that the second term is in fact independent on the ambient curvature $\kappa$ and on the boundary parameter $\sigma$. 

Theorems \ref{mckeanintro} and \ref{twotermasy} are proved in section \ref{fiveandsix}.


\section{Preliminary facts}\label{pre}

In this section we first prove some properties of eigenfunctions of the model one-dimensional problem, then we explain the method of proof of the comparison theorem, based on interior parallels and laplacian comparison. The exposition is based on \cite{S2}, but see also the Appendix in \cite{S1}. We chose to be here as self-contained as possible. 

\subsection{One-dimensional model problem} We will compare $\lambda_1(\Omega,\sigma)$ with the first eigenvalue $\lambda_1(R,\Theta, \sigma)$ of the following one-dimensional mixed problem on the interval $[0,R]$:
\begin{equation}\label{onedrobin}
\threesystem
{u''+\dfrac{\Theta'}{\Theta}u'+\lambda u=0}
{u'(0)=\sigma u(0)}
{u'(R)=0}
\end{equation}
with the weight function $\Theta(r)$ as in \eqref{theta}. Note that the boundary conditions are: Robin at $r=0$, Neumann at $r=R$; with these boundary conditions the spectrum of \eqref{onedrobin} is the spectrum of the operator 
$$
Lu\doteq -u''-\dfrac{\Theta'}{\Theta}u'
$$
acting on the weighted space $L^2([0,R],\mu)$ for the measure $\mu=\Theta (r)\,dr$.  Then, $L$ is self-adjoint and the spectrum is discrete:
$$
\lambda_1(R,\Theta,\sigma)\leq\lambda_2(R,\Theta,\sigma)\leq\dots\to+\infty.
$$
The min-max principle reads
\begin{equation}\label{minmaxoned}
\lambda_1(R,\Theta,\sigma)=\inf_{u\in H^1[0,R]}\Big\{\dfrac{\int_0^R u'(r)^2\Theta(r)\,dr+\sigma u(0)^2}{\int_0^Ru(r)^2\Theta(r)\,dr}\Big\}.
\end{equation}
 If $\sigma=0$ we have a Neumann weighted problem, the non-zero constants are eigenfunctions and then:
$$
\lambda_1(\Omega,\sigma)=\lambda_1(R,\Theta,0)=0.
$$
Therefore, we can assume $\sigma\ne 0$.

\begin{lemma}\label{oned}  Let  $u$ be a positive first eigenfunction of \eqref{onedrobin}.

\parte a If $\sigma>0$ then $u'>0$ on $[0,R)$.

\parte b If $\sigma<0$ then $u'<0$ on $[0,R)$.

\parte c If $\sigma >0$ and $\bar R<R$ then $\lambda_1(R,\Theta,\sigma)<\lambda_1(\bar R,\Theta,\sigma)$.

\medskip

In d) and e), we let $\sigma(r)=\frac{u'(r)}{u(r)}$ and assume that $\Theta$ is strictly log-concave (that is, $(\log\Theta)''< 0$ on $[0,R)$). 

\parte d If $\sigma>0$ then $0\leq \sigma(r)\leq\sigma$ for all $r\in [0,R]$. 

\parte e If $\sigma<0$ then $0\geq \sigma(r)\geq\sigma$ for all $r\in [0,R]$. 
\end{lemma}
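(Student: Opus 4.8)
The plan is to analyze the ODE $u'' + \frac{\Theta'}{\Theta}u' + \lambda u = 0$ directly, exploiting that $\lambda = \lambda_1(R,\Theta,\sigma) > 0$ and that $u > 0$ on $[0,R]$ (first eigenfunction does not change sign; we may normalize it positive). It is convenient to rewrite the equation in divergence form as $(\Theta u')' = -\lambda \Theta u$, so that $\Theta u'$ is strictly decreasing on $(0,R)$ (since $\lambda, \Theta, u > 0$). For part a), when $\sigma > 0$ the boundary condition $u'(0) = \sigma u(0) > 0$ gives $(\Theta u')(0) = \sigma u(0) > 0$, while $(\Theta u')(R) = \Theta(R) u'(R) = 0$ by the Neumann condition at $R$ (using $\Theta(R)$ finite; if $\Theta(R) = 0$ we instead use that $\Theta u'$ decreases to its limit, which one checks is $0$). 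Since $\Theta u'$ is strictly decreasing from a positive value at $0$ to $0$ at $R$, it is positive on $[0,R)$, hence $u' > 0$ there because $\Theta > 0$ on $[0,R)$. Part b) is the mirror image: $\sigma < 0$ forces $(\Theta u')(0) < 0$; but then $\Theta u'$ would be negative and decreasing, giving $u(R) < u(0)$ only after checking the sign is consistent — here one must argue that a positive eigenfunction with $\lambda > 0$ and these boundary conditions really does have $(\Theta u')$ of one sign, which again follows from monotonicity of $\Theta u'$ together with the two boundary values having the appropriate signs (and $u$ staying positive forces $\Theta u' < 0$ throughout, so $u' < 0$ on $[0,R)$).

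For part c), the monotonicity of $\lambda_1$ in the length of the interval when $\sigma > 0$, the plan is to use the variational characterization \eqref{minmaxoned} together with a test-function argument. Given the positive first eigenfunction $\bar u$ on $[0,\bar R]$ with eigenvalue $\bar\lambda = \lambda_1(\bar R,\Theta,\sigma)$, I extend it to $[0,R]$ by setting it equal to the constant $\bar u(\bar R)$ on $[\bar R, R]$; call this extension $v$. This $v$ lies in $H^1[0,R]$ and is admissible in \eqref{minmaxoned}. Its Rayleigh quotient has numerator $\int_0^{\bar R}\bar u'^2\Theta + \sigma \bar u(0)^2 = \bar\lambda \int_0^{\bar R}\bar u^2\Theta$ (the extension contributes nothing to $\int u'^2\Theta$), and denominator $\int_0^{\bar R}\bar u^2\Theta + \bar u(\bar R)^2\int_{\bar R}^R\Theta > \int_0^{\bar R}\bar u^2\Theta$. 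Since $\Theta > 0$ on $[\bar R, R)$ the extra mass in the denominator is strictly positive, so the Rayleigh quotient of $v$ is strictly less than $\bar\lambda$, giving $\lambda_1(R,\Theta,\sigma) \leq$ (Rayleigh quotient of $v$) $< \bar\lambda$. Strictness of the first inequality is not needed since we already have strictness from the denominator; one should just note $v$ is not itself an eigenfunction on $[0,R]$ (it is locally constant near $R$ but the equation there would force $\bar\lambda \cdot (\text{const}) = 0$), which is consistent with a strict inequality anyway.

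For parts d) and e), under the strict log-concavity hypothesis $(\log\Theta)'' < 0$, I study the logarithmic derivative $\sigma(r) = u'(r)/u(r)$, which satisfies the Riccati equation obtained by differentiating: $\sigma'(r) = \frac{u''}{u} - \sigma(r)^2 = -\frac{\Theta'}{\Theta}(r)\sigma(r) - \lambda - \sigma(r)^2$, i.e. $\sigma' + \sigma^2 + (\log\Theta)'\sigma + \lambda = 0$. We know $\sigma(0) = \sigma$ and $\sigma(R) = 0$ (from $u'(R) = 0$, $u(R) \neq 0$). Consider $\sigma > 0$. From part a), $u' > 0$ on $[0,R)$, so $\sigma(r) \geq 0$ throughout (and $= 0$ only at $R$), giving the lower bound in d). For the upper bound $\sigma(r) \leq \sigma$, I want to show $\sigma$ is non-increasing, or at least never exceeds $\sigma(0)$. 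The idea is to evaluate the Riccati equation at a hypothetical interior maximum point $r_0 \in (0,R)$ of $\sigma$: there $\sigma'(r_0) = 0$, so $\sigma(r_0)^2 + (\log\Theta)'(r_0)\sigma(r_0) + \lambda = 0$; I want to rule out $\sigma(r_0) > \sigma(0) = \sigma$. The main obstacle — and the step I expect to be delicate — is making this maximum-principle / Riccati comparison argument clean; the cleanest route is probably to show directly that $\sigma$ is monotone non-increasing by differentiating the Riccati relation once more and using $(\log\Theta)'' < 0$, or alternatively to compare $\sigma(r)$ with the constant solution candidate and invoke a sign analysis of $\sigma'$ at the first point where $\sigma$ would hit the value $\sigma(0)$ again. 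In either formulation, strict log-concavity of $\Theta$ is exactly what forces the coefficient $(\log\Theta)'$ to decrease, which pins down the sign of $\sigma'$ and yields $0 \leq \sigma(r) \leq \sigma$. Part e) is entirely symmetric: for $\sigma < 0$ we have $u' < 0$ so $\sigma(r) \leq 0$, and the same Riccati analysis with the reversed inequalities gives $\sigma \leq \sigma(r) \leq 0$.
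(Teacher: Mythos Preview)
Your argument for part a) via the divergence form $(\Theta u')'=-\lambda\Theta u$ is correct and in fact cleaner than the paper's: the paper argues by contradiction, assuming a first interior zero $\bar R$ of $u'$ and then showing via the min-max principle that $\lambda_1(\bar R,\Theta,\sigma)>\lambda_1(R,\Theta,\sigma)$, which contradicts the fact that the restriction of $u$ to $[0,\bar R]$ is a positive eigenfunction there. Your monotonicity-of-$\Theta u'$ argument bypasses all of this.

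However, your treatment of part b) contains a genuine error. You write ``a positive eigenfunction with $\lambda>0$'' and ``$\Theta u'$ would be negative and decreasing,'' but when $\sigma<0$ one has $\lambda_1(R,\Theta,\sigma)<0$ (take $u\equiv 1$ in the min-max quotient). With $\lambda<0$ the identity $(\Theta u')'=-\lambda\Theta u$ makes $\Theta u'$ strictly \emph{increasing}, not decreasing; then from $(\Theta u')(0)=\sigma u(0)<0$ and $(\Theta u')(R)=0$ you immediately get $\Theta u'<0$ on $[0,R)$ and hence $u'<0$. So your framework salvages part b) once the sign of $\lambda$ is corrected, but as written the argument is internally inconsistent (a strictly decreasing function cannot go from a negative value to zero). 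The paper takes a different route here: if $u'$ had a first interior zero $\bar R$, the restriction of $u$ to $[\bar R,R]$ would be a positive Neumann eigenfunction, forcing $\lambda\geq 0$, contradicting $\lambda<0$.

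Part c) is exactly the paper's argument. For parts d) and e), your plan matches the paper's: derive the Riccati equation for $\sigma(r)$, then rule out an interior maximum (resp.\ minimum) by differentiating once more and using $(\log\Theta)''<0$. Concretely, at an interior critical point $\bar r$ of $\sigma$ one has $\sigma''(\bar r)=-(\log\Theta)''(\bar r)\,\sigma(\bar r)$, which is strictly positive (resp.\ negative) since $\sigma(\bar r)>0$ (resp.\ $<0$) on $(0,R)$; this excludes an interior maximum (resp.\ minimum), so the extremum of $\sigma$ on $[0,R]$ occurs at the endpoints, giving $0\le\sigma(r)\le\sigma$ (resp.\ $\sigma\le\sigma(r)\le 0$). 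Your first-derivative condition $\sigma(r_0)^2+(\log\Theta)'(r_0)\sigma(r_0)+\lambda=0$ alone does not give the contradiction; the second differentiation is essential, and you correctly identify it as the route.
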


\begin{proof}

\parte a Let $\sigma>0$ and let $\bar R$ be the first zero of $u'$. Note that $\bar R>0$ because $u'(0)=\sigma u(0)>0$.  We reason by contradiction and assume that $\bar R<R$. Then, by restriction, $u$ is also an eigenfunction of the same problem on $[0,\bar R]$; as $u$ is positive, it must be the first, hence:
$$
\lambda_1(\bar R,\Theta,\sigma)=\lambda_1(R,\Theta,\sigma).
$$
We show that this is impossible. 
Consider the function $w\in H^1[0,R]$ defined as follows:
\begin{equation}
w(r)=\twosystem
{u(r)\quad r\in[0,\bar  R]}
{u(\bar R)\quad r\in [\bar R,R].}
\end{equation}
We use $w$ as a test-function for the problem \eqref{onedrobin} on $[0,R]$ and therefore, by the min-max principle \eqref{minmaxoned}, we have :
\begin{equation}\label{odone}
\lambda_1(R,\Theta,\sigma)\int_0^Rw^2\Theta\leq\int_0^Rw'^2\Theta+\sigma w(0)^2
\end{equation}
 Now:
\begin{equation}\label{odtwo}
\int_0^Rw^2\Theta=\int_0^{\bar R}u^2\Theta+u(\bar R)^2\int_{\bar R}^{R}\Theta
>\int_0^{\bar R}u^2\Theta
\end{equation}
because $\Theta$ is positive on $[0,R)$ by assumption. 
On the other hand
\begin{equation}\label{odthree}
\begin{aligned}
\int_0^Rw'^2\Theta+\sigma w(0)^2&=\int_0^{\bar R}u'^2\Theta+\sigma u(0)^2\\
&=\int_0^{\bar R}uLu\cdot\Theta\\
&=\lambda_1(\bar R,\Theta,\sigma)\int_0^{\bar R}u^2\Theta
\end{aligned}
\end{equation}
Putting together \eqref{odone}, \eqref{odtwo} and \eqref{odthree} we get
$
\lambda_1(R,\Theta,\sigma)<\lambda_1(\bar R,\Theta,\sigma)
$
which is a contradiction as asserted. 

\parte b Let $\sigma<0$ and let $\bar R$ be the first zero of $u'$; note that $\bar R>0$. Assume $\bar R<R$. Let $v$ be the restriction of $u$ to $[\bar R,R]$. Then $v$ is a Neumann eigenfunction of $[\bar R,R]$ which implies
$
\lambda_1(R,\Theta,\sigma)\geq 0
$
because the Neumann spectrum is non-negative.
This is impossible because, as $\sigma<0$, we have $\lambda_1(R,\Theta,\sigma)< 0$ by the min-max principle (take $w=1$ as test-function).

\parte c One prolongs an eigenfunction of $[0,\bar R]$ on $[0,R]$ by a constant and uses the min-max principle. The construction is exactly as in part a).

\parte d Recall that $u$ satisfies 
$
u''+\frac{\Theta'}{\Theta}u'+\lambda u=0;
$
 an easy calculation shows that
$$
\sigma'=\Big(\dfrac{u'}{u}\Big)'=-\dfrac{\Theta'}{\Theta}\sigma-\lambda -\sigma^2.
$$
Now $\sigma$ is positive on $(0,R)$ by a); moreover $\sigma(0)=\sigma>0$ and $\sigma(R)=0$. It is then enough to show that $\sigma$ has no relative maximum in the open interval $(0,R)$. Assume by contradiction that $\bar r$ is one such. Then we would have
\begin{equation}\label{contra}
\sigma'(\bar r)=0, \quad \sigma''(\bar r)\leq 0.
\end{equation}
Now
$$
\sigma''=-\Big(\dfrac{\Theta'}{\Theta}\Big)'\sigma-\dfrac{\Theta'}{\Theta}\sigma'-2\sigma\sigma',
$$
hence:
$$
\sigma''(\bar r)=-(\log\Theta)''(\bar r)\sigma(\bar r)>0
$$
because $\sigma$ is strictly positive on $(0,R)$, which contradicts \eqref{contra}. Hence the assertion. 

\smallskip

\parte e We proceed in a similar way. We know that $u'<0$ on $[0,R)$, hence $\sigma(r)<0$ on that interval.  It is enough to show that $\sigma$ has no relative minimum in the open interval $(0,R)$. We assume  that $\bar r\in (0,R)$ is such a relative minimum and find a contradiction as before. 
\end{proof}


\subsection{Distance to the boundary and cut-locus} For complete details we refer to \cite{S2}. Let $\Omega$ be a compact domain with smooth boundary and let $\rho: \Omega\to\reals$ be the distance function to the boundary:
$$
\rho(x)={\rm dist}(x,\bd\Omega).
$$
The function $\rho$ is Lipschitz and, as $\bd\Omega$ is smooth, it is  smooth on a small tubular neighborhood of the boundary;  moreover, $\rho$ is singular precisely on the {\it cut-locus} ${\rm Cut}_{\bd\Omega}$, which is a closed set of measure zero in $\Omega$. Let us recall its definition. 

  Let $N_x$ be the inner unit normal at $x\in\bd\Omega$. Consider the unit speed geodesic starting at $x$ and going inside $\Omega$, in the direction normal to the boundary, that is, $\gamma_x(t)=\exp_x(tN_x)$. The {\it cut-radius} at $x$ is the positive number $c(x)$ defined as follows:

\nero {\it the geodesic $\gamma_x(t)$ minimizes distance to $\bd\Omega$ if and only if $t\in [0,c(x)]$.}

\medskip

Thus, we obtain the map $c:\bd\Omega\to [0,+\infty)$ which is known to be continuous; it is positive because $\bd\Omega$ is smooth (in fact, 
$\inf_{\bd\Omega}c$  is also called the {\it injectivity radius} of the normal exponential map). The cut-locus ${\rm Cut}_{\bd\Omega}$ is the closed subset of $\Omega$ defined by
$$
{\rm Cut}_{\bd\Omega}=\{\exp_x(c(x)N_x): x\in\bd\Omega\}.
$$
It is known that a point on the cut-locus is either a focal point along a normal geodesic, or is a point which can be joined to the boundary by at least two distinct minimizing geodesics. The cut locus has zero measure in $\Omega$; we let
$$
\Omega_{\rm reg}\doteq \Omega\setminus {\rm Cut}_{\bd\Omega},
$$
and call it the set of {\it regular points} of $\rho$. In fact, $\rho$ is $C^{\infty}$-smooth on $\Omega_{\rm reg}$ and there one has $\abs{\nabla\rho}=1$.
In conclusion, we have a disjoint union
$$
\Omega=\Omega_{\rm reg}\cup {\rm Cut}_{\bd\Omega}.
$$
Note that each point $p\in\Omega_{\rm reg}$ can be joined to the boundary by a unique geodesic segment minimizing distance. 


\subsection{Normal coordinates} We now consider the set:
$$
U\doteq\{(r,x)\in [0,\infty)\times\bd\Omega : 0\leq r<c(x)\}
$$
and see that the exponential map $\Phi:U\to\Omega_{\rm reg}$ defined by $\Phi(r,x)=\exp_x(rN_x)$ is actually a diffeomorphism. The pair $(r,x)$ gives rise to the {\it normal coordinates} of a regular point. We pull-back the Riemannian volume form by $\Phi$, and we write
$$
\Phi^{\star}dv_n(r,x)=\theta(r,x)\,dr\,dv_{n-1}(x),
$$
where the Jacobian $\theta(r,x)$ can then be seen as the density of the Riemannian measure in normal coordinates. Obviously $\theta$ is positive on $U$ and $\theta(0,x)=1$ for all $x\in\bd\Omega$. Any integrable function $f$ on $\Omega$ can be integrated in normal coordinates, as follows:
\begin{equation}\label{normal}
\int_{\Omega}f dv_n=\int_{\Omega_{\rm reg}}f dv_n=\int_{\bd\Omega}\int_0^{c(x)}f(r,x)\theta(r,x)dr\, dv_{n-1}(x).
\end{equation}
where we identify a regular point of $\Omega$ with its normal coordinates. 
The map $r\mapsto \theta(r,x)$ is smooth and  extends by continuity on $[0,c(x)]$ :
$$
\theta(c(x),x)=\lim_{r\to c(x)_-}\theta(r,x).
$$
Note that $\theta(c(x),x)$ could be zero; it is zero precisely at the focal points of the boundary. 
The function 
$$
\Delta_{\rm reg}\rho\doteq\Delta(\rho\vert_{\Omega_{\rm reg}})
$$
is the Laplacian of $\rho$ restricted to the regular points. It is then smooth on $\Omega_{\rm reg}$, and it is also in $L^1(\Omega)$ (see \cite{S2}). In normal coordinates it has the following expression:
\begin{equation}\label{deltareg}
\Delta_{\rm reg}\rho(r,x)=-\dfrac{\theta'(r,x)}{\theta(r,x)}
\end{equation}
where for simplicity $\theta'$ refers to differentiation with respect to $r$. For a proof see \cite{G} p. 40. Note also that $\Delta_{\rm reg}\rho(r,x)$ is $(n-1)$-times the mean curvature at $(r,x)$ of the level set $\rho=r$. 
By the classical Heintze-Karcher volume estimates we see that all regular points $(r,x)$ one has the inequality:
\begin{equation}\label{hk}
\Delta_{\rm reg}\rho(r,x)\geq -\dfrac{\Theta'(r)}{\Theta(r)}
\end{equation} 
where $\Theta$ has been defined in \eqref{theta}.


\subsection{Distributional Laplacian and main technical lemma} As $\rho$ is only Lipschitz, we will define its Laplacian in the distributional sense,  as the pairing
\begin{equation}\label{pairing}
\Big(\Delta\rho,\phi\Big)\doteq \int_{\Omega}\rho\Delta\phi
\end{equation}
for all smooth functions $\phi$ which are compactly supported in the interior of $\Omega$. We have the following lemma.

\begin{lemme}\label{prep} \parte a The distribution $\Delta\rho$ splits:
\begin{equation}\label{splits}
\Delta\rho=\Delta_{\rm reg}\rho+\Delta_{\rm cut}\rho
\end{equation}
where $\Delta_{\rm reg}\rho\in L^1(\Omega)$ is as in \eqref{deltareg}, and where $\Delta_{\rm cut}\rho$ is a distribution supported on the cut-locus ${\rm Cut}_{\bd\Omega}$ and defined by:
\begin{equation}\label{deltacut}
\Big(\Delta_{\rm cut}\rho,\phi\Big)=\int_{\bd\Omega}\phi(c(x),x)\theta(c(x),x)\, dv_{n-1}(x).
\end{equation}

\parte b  $\Delta_{\rm cut}\rho$ is a positive distribution of order zero, hence a (positive) Radon measure; thus the pairing \eqref{pairing} can be extended to any continuous function $\phi$ on $\Omega$.  

\parte c As a consequence of the splitting \eqref{splits}, the positivity of $\Delta_{\rm cut}\rho$  and \eqref{hk} we have, in the distributional sense:
$$
\Delta\rho\geq  -\dfrac{\Theta'}{\Theta}\circ\rho,
$$
where $\Theta(r)$ is our weight function, depending on the curvature data $(K,H)$ and defined in \eqref{theta}.
\end{lemme}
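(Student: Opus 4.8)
The plan is to prove the three parts essentially in order, the crucial content being part a); parts b) and c) are then nearly formal. First I would establish the splitting \eqref{splits}. The idea is to approximate $\Omega_{\rm reg}$ from inside by the open sets $\Omega_\epsilon\doteq\{x:\rho(x)>\epsilon\}\cap\Phi(U_\epsilon)$, where $U_\epsilon=\{(r,x):\epsilon<r<c(x)-\epsilon\}$ (or some comparable exhaustion adapted to the cut locus), on which $\rho$ is genuinely $C^\infty$. For a test function $\phi\in C_c^\infty(\mathrm{int}\,\Omega)$ we write $\int_\Omega\rho\,\Delta\phi=\lim_\epsilon\int_{\Omega_\epsilon}\rho\,\Delta\phi$ by dominated convergence, and on each $\Omega_\epsilon$ we integrate by parts twice (Green's formula), using $\abs{\nabla\rho}=1$ on $\Omega_{\rm reg}$ and $\Delta\rho=\Delta_{\rm reg}\rho$ there. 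This produces the bulk term $\int_{\Omega_{\rm reg}}(\Delta_{\rm reg}\rho)\,\phi=\int_\Omega(\Delta_{\rm reg}\rho)\,\phi$ (legitimate since $\Delta_{\rm reg}\rho\in L^1(\Omega)$, a fact I may cite from \cite{S2}), plus boundary terms on the two faces $\{r=\epsilon\}$ and $\{r=c(x)-\epsilon\}$. The inner face $\{r=\epsilon\}$ contributes nothing in the limit because $\phi$ has compact support away from $\bd\Omega$; actually one should place the inner cutoff so that the $r=\epsilon$ face sits in $\mathrm{supp}\,\phi$'s complement, or track it and see it vanishes as $\epsilon\to0$ since $\rho$ is smooth near $\bd\Omega$ and the boundary integrand is bounded while $\phi$ and $\nabla\phi$ are controlled. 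The outer face collapses onto the cut locus; using the normal-coordinate volume form $\theta(r,x)\,dr\,dv_{n-1}$ and the continuity of $\theta(\cdot,x)$ up to $r=c(x)$, the surviving boundary term converges to exactly $\int_{\bd\Omega}\phi(c(x),x)\,\theta(c(x),x)\,dv_{n-1}(x)$, which we take as the definition \eqref{deltacut} of $(\Delta_{\rm cut}\rho,\phi)$. This is the step I expect to be the main obstacle: one must handle the approximation near the cut locus carefully, since $c$ is merely continuous (not smooth) and the level sets $\{r=c(x)-\epsilon\}$ need not be smooth hypersurfaces for fixed $\epsilon$; the cleanest route is to argue in normal coordinates on $U$, where $\Phi$ is a diffeomorphism onto $\Omega_{\rm reg}$ and everything reduces to a one-dimensional integration by parts in $r$ on $[0,c(x)]$ for each fixed $x$, followed by integration over $\bd\Omega$ and an appeal to Fatou/dominated convergence with the $L^1$ bound on $\Delta_{\rm reg}\rho$.

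For part b), positivity is immediate from \eqref{deltacut}: if $\phi\geq0$ then $\phi(c(x),x)\geq0$ and $\theta(c(x),x)\geq0$ (the Jacobian $\theta$ is nonnegative, vanishing only at focal points), so $(\Delta_{\rm cut}\rho,\phi)\geq0$. A positive distribution is automatically of order zero, i.e.\ continuous for the sup norm on compact sets, hence extends to a Radon measure by the Riesz representation theorem; consequently the pairing $(\Delta\rho,\phi)=\int_\Omega(\Delta_{\rm reg}\rho)\,\phi+(\Delta_{\rm cut}\rho,\phi)$ makes sense for every $\phi\in C^0(\Omega)$, the first term being an honest $L^1$–$L^\infty$ pairing.

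For part c), combine the pieces. On $\Omega_{\rm reg}$ the Heintze–Karcher bound \eqref{hk} gives $\Delta_{\rm reg}\rho(r,x)\geq-\Theta'(r)/\Theta(r)$ pointwise, hence $\Delta_{\rm reg}\rho\geq-(\Theta'/\Theta)\circ\rho$ as $L^1$ functions; adding the nonnegative measure $\Delta_{\rm cut}\rho$ preserves the inequality. Thus for every nonnegative $\phi\in C^\infty_c(\mathrm{int}\,\Omega)$ (and by part b) even for nonnegative continuous $\phi$) one has $(\Delta\rho,\phi)=\int_\Omega(\Delta_{\rm reg}\rho)\,\phi+(\Delta_{\rm cut}\rho,\phi)\geq-\int_\Omega\big((\Theta'/\Theta)\circ\rho\big)\phi$, which is precisely the asserted distributional inequality $\Delta\rho\geq-(\Theta'/\Theta)\circ\rho$. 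This completes the proof.
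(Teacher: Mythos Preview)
Your proposal is correct and ultimately lands on the same argument as the paper, but you take a detour to get there. The paper's proof is more direct than your initial exhaustion-by-$\Omega_\epsilon$ scheme: since $\rho$ is Lipschitz, hence $H^1(\Omega)$, one has immediately
\[
(\Delta\rho,\phi)=\int_\Omega\rho\,\Delta\phi=\int_\Omega\langle\nabla\rho,\nabla\phi\rangle
\]
for $\phi\in C_c^\infty(\mathrm{int}\,\Omega)$, with no approximation needed. One then computes the right-hand side in normal coordinates as $\int_{\partial\Omega}\int_0^{c(x)}\phi'(r,x)\theta(r,x)\,dr\,dv_{n-1}(x)$ and performs a single classical integration by parts in $r$ on $[0,c(x)]$; the boundary term at $r=0$ vanishes because $\phi(0,x)=0$, and the term at $r=c(x)$ is exactly \eqref{deltacut}. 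This is precisely the ``cleanest route'' you identify at the end of your discussion of part a), so your instinct is right; the point is that the preliminary step $(\Delta\rho,\phi)=\int\langle\nabla\rho,\nabla\phi\rangle$ (one weak integration by parts, valid for Lipschitz functions) eliminates any need for the exhaustion, the double Green's formula, or the worries about regularity of $\{r=c(x)-\epsilon\}$. Your treatment of parts b) and c) matches the paper.
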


\begin{proof} We first observe that we have, for all $\phi\in C^{\infty}_c(\Omega)$:
$$
\Big(\Delta\rho,\phi\Big)=\int_{\Omega}\rho\Delta\phi\, dv_n=\int_{\Omega}\scal{\nabla\rho}{\nabla\phi}\, dv_n,
$$
because $\rho$ is Lipschitz hence also in $H^1(\Omega)$. Integrating in normal coordinates we see:
$$
\int_{\Omega}\scal{\nabla\rho}{\nabla\phi}dv_n=\int_{\bd\Omega}\int_0^{c(x)}\phi'(r,x)\theta(r,x)\,dr\,dv_{n-1}(x)
$$
Integrating by parts, since $\phi(0,x)=0$:
$$
\int_0^{c(x)}\phi'(r,x)\theta(r,x)\,dr=\phi(c(x),x)\theta(c(x),x)+\int_0^{c(x)}\phi(r,x)\Big(-\dfrac{\theta'(r,x)}{\theta(r,x)}\Big)\theta(r,x)\,dr.
$$
Integrating on $\bd\Omega$ and taking into account \eqref{deltareg} and \eqref{deltacut} we see:
$$
\int_{\Omega}\scal{\nabla\rho}{\nabla\phi}dv_n=\Big(\Delta_{\rm cut}\rho,\phi\Big)+\int_{\Omega}\phi\Delta_{\rm reg}\rho\,dv_n,
$$
which shows the splitting \eqref{splits}.
\end{proof}
As a matter of notation, from now on we will write:
$$
\Big(\Delta\rho,\phi\Big)=\int_{\Omega}\phi\Delta\rho,
$$
where on the right it is understood the integral of $\phi$ with respect to the measure $\Delta\rho=\Delta_{\rm reg}\rho+\Delta_{\rm cut}\rho$. Note that $\phi$ can be any continuous function, not necessarily supported in the interior of $\Omega$.


\subsection{Main lemma} 
The lemma which follows is proved by integrating in normal coordinates, as we did in Lemma \ref{prep}. 

\begin{lemme}\label{tech}
 Let $u:[0,R]\to\reals$ be smooth and consider the function $v=u\circ\rho$ on $\Omega$. Then:

\parte a  $v$ is Lipschitz. 

\parte b One has, in the sense of distributions:
$$
\Delta v=\Delta(u\circ\rho)=-u''\circ\rho+(u'\circ\rho)\Delta\rho.
$$

\parte c Green's formula holds: if $v=u\circ\rho$ and $f\in C^2(\Omega)$ then
$$
\int_{\Omega}\Big(f\Delta v-v\Delta f\Big)=\int_{\bd\Omega}\Big(f\derive vN-v\derive fN\Big)dv_{n-1},
$$
where $\Delta v$ is taken in the sense of distributions, as in b).
\end{lemme}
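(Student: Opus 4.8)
The plan is to establish the three assertions in order, each by reducing to a computation in normal coordinates of exactly the type already performed in Lemma~\ref{prep}. Part a) is immediate: $\rho$ is $1$-Lipschitz, and since $\Omega$ is compact with inradius $R$ we have $\rho(\Omega)\subseteq[0,R]$; as $u$ is smooth on the compact interval $[0,R]$ it is Lipschitz there, so $v=u\circ\rho$ is a composition of Lipschitz maps, hence Lipschitz. In particular $v\in H^1(\Omega)$, which is what makes $\Delta v$ meaningful as a distribution.

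For b) I would start from the definition: for $\phi\in C^\infty_c(\Omega)$,
$$\big(\Delta v,\phi\big)=\int_\Omega v\,\Delta\phi=\int_\Omega\scal{\nabla v}{\nabla\phi}\,dv_n,$$
the second equality because $v\in H^1$. On the full-measure set $\Omega_{\rm reg}$ one has $v=u\circ\rho$ smoothly, with $\nabla v=(u'\circ\rho)\nabla\rho$ and $\abs{\nabla\rho}=1$; passing to normal coordinates via \eqref{normal} and using $\scal{\nabla\rho}{\nabla\phi}=\partial_r\phi$ turns the integral into $\int_{\bd\Omega}\int_0^{c(x)}u'(r)\,\partial_r\phi(r,x)\,\theta(r,x)\,dr\,dv_{n-1}(x)$. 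Next I would integrate by parts in $r$ with $u'(r)\theta(r,x)$ as one factor: the endpoint term at $r=0$ vanishes because $\phi$ is supported in the interior (so $\phi(0,x)=0$), the endpoint term at $r=c(x)$ is $u'(c(x))\theta(c(x),x)\phi(c(x),x)$, and the remaining integrand is $-\big(u''\theta+u'\theta'\big)\phi$. Using \eqref{deltareg} to rewrite $\theta'/\theta=-\Delta_{\rm reg}\rho$, recognizing via \eqref{deltacut} the $r=c(x)$ contribution as $\big((u'\circ\rho)\Delta_{\rm cut}\rho,\phi\big)$, and invoking the splitting \eqref{splits}, I obtain $\big(\Delta v,\phi\big)=\int_\Omega\big(-u''\circ\rho+(u'\circ\rho)\Delta\rho\big)\phi$. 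Since $u'\circ\rho$ is continuous on all of $\Omega$, the object $(u'\circ\rho)\Delta\rho$ is an $L^1$ function plus a finite Radon measure (by Lemma~\ref{prep}(b)), so this identity extends to pairing against any continuous test function — a fact needed in c).

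For c), the strategy is to run two integrations by parts and compare. First, the Green identity for $H^1$ functions against $f\in C^2(\Omega)$ gives
$$\int_\Omega\scal{\nabla v}{\nabla f}\,dv_n=\int_\Omega v\,\Delta f\,dv_n-\int_{\bd\Omega}v\,\derive fN\,dv_{n-1}.$$
Second, I would repeat the normal-coordinate computation of b) with $f$ in place of $\phi$: now $f$ is no longer compactly supported, so the endpoint term at $r=0$ survives, and using $\nabla\rho=N$ and $\theta(0,\cdot)=1$ on $\bd\Omega$ — equivalently $v|_{\bd\Omega}=u(0)$ and $\derive vN|_{\bd\Omega}=u'(0)$ — it equals $-\int_{\bd\Omega}f\,\derive vN\,dv_{n-1}$; reassembling the other terms via b) (extended to continuous test functions) yields
$$\int_\Omega\scal{\nabla v}{\nabla f}\,dv_n=\int_\Omega f\,\Delta v-\int_{\bd\Omega}f\,\derive vN\,dv_{n-1}.$$
Subtracting the two displays produces $\int_\Omega\big(f\,\Delta v-v\,\Delta f\big)=\int_{\bd\Omega}\big(f\,\derive vN-v\,\derive fN\big)\,dv_{n-1}$, which is the claim.

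The one genuinely delicate point is the handling of the cut-locus term $\Delta_{\rm cut}\rho$: it is not an $L^1$ function but only a positive Radon measure, so one has to lean on Lemma~\ref{prep}(b) to legitimize pairing it with the continuous functions $u'\circ\rho$ and $f$, and to check that all the ``$r=c(x)$'' endpoint contributions assemble correctly into $(u'\circ\rho)\Delta_{\rm cut}\rho$. Beyond that, the proof is bookkeeping: tracking the two radial integrations by parts and, in particular, the endpoint terms at $r=0$, whose absence in b) and presence in c) is exactly what separates the two statements.
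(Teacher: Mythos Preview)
Your proof is correct and follows essentially the same approach as the paper: part a) is immediate, part b) is obtained by passing to normal coordinates and integrating by parts in $r$ (with the $r=c(x)$ endpoint producing the $\Delta_{\rm cut}\rho$ contribution), and part c) is obtained by computing $\int_\Omega\scal{\nabla v}{\nabla f}$ in two ways and equating. Your write-up is in fact more explicit than the paper's sketch about why the pairing with $\Delta_{\rm cut}\rho$ extends to continuous test functions and how the $r=0$ boundary term appears in c) but not in b).
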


About the proof: first observe that a) is immediate. For b), take a test-function $\phi$ and observe that 
$$
\Big(\Delta (u\circ\rho),\phi\Big)=\int_{\Omega}(u\circ\rho)\Delta\phi=
\int_{\Omega}(u'\circ\rho)\scal{\nabla\rho}{\nabla\phi}dv_n,
$$
where the last equality holds because $\rho$ is Lipschitz (hence in $H^1(\Omega)$), and $\nabla(u\circ\rho)=(u'\circ\rho) \nabla\rho$. We integrate the last term in normal coordinates, and then by parts in the inner integral and the equality follows. Finally, for c), we compute  $\int_{\Omega}\scal{\nabla f}{\nabla v}dv_n$ in two different ways. First:
\begin{equation}\label{firstway}
\int_{\Omega}\scal{\nabla f}{\nabla v}dv_n=\int_{\Omega}v\Delta f-\int_{\bd\Omega}v\derive fN,
\end{equation}
which is easily shown to hold because $v$ is Lipschitz and $f$ is smooth. On the other hand:
\begin{equation}\label{otoh}
\begin{aligned}
\int_{\Omega}\scal{\nabla f}{\nabla v}dv_n&=\int_{\Omega_{\rm reg}}\scal{\nabla f}{\nabla v}dv_n\\
&=\int_{\Omega_{\rm reg}}(u'\circ\rho)\scal{\nabla f}{\nabla\rho}dv_n\\
&=\int_{\bd\Omega}\int_0^{c(x)}\Big(u'(r)f'(r,x)\theta(r,x)\,dr\Big)\,dx
\end{aligned}
\end{equation}
Now we integrate by parts in the inner integral and equate the two expressions; after some work we get the desired identity. We omit further details because they are straightforward.


\section{Proof of the comparison theorem} \label{scomparison}

\subsection{Proof of the lower bound in Theorem \ref{compintro}} \label{prooflb} We assume $\sigma>0$ and fix a positive first eigenfunction $u$ of our one-dimensional model problem on $[0,R]$, associated to $\bar\lambda$. It satisfies: 
$$
\threesystem
{u''+\dfrac{\Theta'}{\Theta}u'+\bar\lambda u=0}
{u'(R)=0}
{u'(0)=\sigma u(0)}
$$
Consider the pull-back function on $\Omega$ given by
$
v=u\circ\rho.
$
By Lemma \ref{oned} we know that $u'\geq 0$ on $[0,R]$. Hence, by Lemma \ref{tech}b and Lemma \ref{prep}c
$$
\begin{aligned}
\Delta v&=-u''\circ\rho+(u'\circ\rho)\Delta\rho\\
&\geq \Big(-u''-\dfrac{\Theta'}{\Theta}u'\Big)\circ\rho\\
&=\bar\lambda (u\circ\rho),
\end{aligned}
$$
that is, 
\begin{equation}\label{deltaine}
\Delta v\geq \bar\lambda v.
\end{equation}
Next, we consider a first positive eigenfunction $f$ of our 
Robin problem
\begin{equation}\label{robintext}
\twosystem
{\Delta f=\lambda f\quad\text{on}\quad \Omega,}
{\derive fN=\sigma f\quad\text{on}\quad \bd\Omega.}
\end{equation}
We multiply \eqref{deltaine} by $f$, the first equation of \eqref{robintext} by $v$ and subtract. We obtain
$$
f\Delta v-v\Delta f\geq(\bar\lambda-\lambda)fv.
$$
Now, by Lemma \ref{tech}c 
$$
\int_{\Omega}\Big(f\Delta v-v\Delta f\Big)=\int_{\bd\Omega}\Big(f\derive vN-v\derive fN\Big)dv_{n-1}=0,
$$
simply because, on $\bd\Omega$:
$$
\derive vN=u'(0)=\sigma u(0)=\sigma v\quad\text{and}\quad \derive fN=\sigma f.
$$
We conclude that
$$
0\geq (\bar\lambda-\lambda)\int_{\Omega}fv.
$$
As $f$ and $v$ are both positive we must have $\bar\lambda-\lambda\leq 0$ as asserted.


\subsection{Proof of the upper bound of Theorem \ref{compintro}}
Now assume $\sigma<0$ and define $u$ and $v$ as in the previous case. Lemma \ref{oned} now says that $u'\leq 0$, hence we see
$
\Delta v\leq\bar\lambda v.
$
The proof goes exactly as before, with the inequalities reversed. The assertion follows.


\subsection{Proof of Theorem \ref{zerocurvature}} Recall that we have to show that if $K=H=0$ then 
$$
\lambda_1(\Omega,\sigma)\geq \lambda_1([0,2R],\sigma),
$$
and that if $\Omega$ is a flat cylinder then equality holds. 

\smallskip

\begin{proof} We take $K=H=0$ hence $\Theta(r)=1$ for all $r$. Then, $\lambda_1(R,\Theta,\sigma)$ is the first eigenvalue of the problem:
\begin{equation}\label{problemone}
\threesystem
{u''+\lambda u=0}
{u'(0)=\sigma u(0)}
{u'(R)=0}
\end{equation}
On the other hand, $\lambda_1([0,2R],\sigma)$ is the first eigenvalue of the following problem on $[0,2R]$:
\begin{equation}\label{problemtwo}
\threesystem
{u''+\lambda u=0}
{u'(0)=\sigma u(0)}
{u'(2R)=-\sigma u(2R)}
\end{equation}
We show that problems \eqref{problemone} and \eqref{problemtwo} have in fact the same the first eigenvalue. 
First we observe that \eqref{problemtwo} is invariant under the symmetry $r\to 2R-r$. This means that, if we fix a positive first eigenfunction $u$
of problem \eqref{problemtwo},  then $u$ must be either even or odd at $r=R$:  as $u$ is positive it must be even at $R$, so that $u'(R)=0$. Hence $u$ is also an eigenfunction of problem \eqref{problemone}, necessarily the first (again, because it is positive).  In conclusion:
$$
\lambda_1(R,\Theta,\sigma)=\lambda_1([0,2R],\sigma),
$$
as asserted. 

\smallskip

Now assume that $\Omega=[0,2R]\times\Sigma$ with the product metric. Then we can separate variables and see that $L^2(\Omega)$ admits a basis of eigenfunctions of the Robin problem of type $f(r,x)=u(r)\phi(x)$ where $r\in [0,R]$ and $x\in\Sigma$. Here $u$ is radial and satisfies \eqref{problemtwo}, and $\phi$ is an eigenfunction of the Laplacian on the closed manifold $\Sigma$. Clearly an eigenfunction associated to $\lambda_1(\Omega,\sigma)$ must correspond to the case where  $\phi(x)$ is a (non-zero) constant. Hence $f(r,x)=cu(r)$ is actually radial. This shows that equality holds for any flat cylinder, and the proof is complete.  
\end{proof}


\subsection{Proof of Corollary \ref{convex}} \label{proofconvex} It is enough to show that
the first eigenvalue of \eqref{problemone} satisfies, for $\sigma>0$:
$$
\lambda_1>
\dfrac{\pi^2\sigma}{\pi^2R+4R^2\sigma},
$$
while, if $\sigma<0$, one has $\lambda_1<-\sigma$.

\begin{proof} Case $\sigma>0$. The spectrum is positive and any eigenfunction of problem \eqref{problemone} has the form
$$
u(r)=a\sin(\sqrt{\lambda}r)+b\cos(\sqrt{\lambda}r)
$$
for $\lambda>0$. An easy calculation shows that the boundary conditions force $a\ne 0$ and
$$
\frac{b}{a}=\frac{\sqrt{\lambda}}{\sigma}=\cot(R\sqrt{\lambda}).
$$
If one sets $x=R\sqrt{\lambda}$ then $x\tan x=R\sigma$ hence $x$ must be a positive zero of the function $\phi(x)=x\tan x-R\sigma$. In conclusion, we see that the first eigenvalue of \eqref{problemtwo} is given by :
$$
\lambda_1=\dfrac{c_1}{R^2},
$$
where $c_1$ is the first the positive zero of $\phi(x)=x\tan x-R\sigma$. Now  observe that $\phi(0)$ is negative and $\phi(x)$ gets large positive when $x$ is close to $\frac{\pi}2$. This means that $c_1\in (0,\frac{\pi}{2})$; by the Becker-Starck inequality \cite{BS}:
$$
\tan x<\frac{\pi^2 x}{\pi^2-4x^2}, \quad \text{for all}\quad x\in (0,\frac{\pi}{2}),
$$
we see:
$$
R\sigma=c_1\tan c_1<\frac{\pi^2 c_1^2}{\pi^2-4c_1^2}\quad\text{hence}\quad c_1^2>\dfrac{\pi^2R\sigma}{\pi^2+4R\sigma},
$$
hence
$$
\lambda_1=\dfrac{c_1^2}{R^2}>\dfrac{\pi^2 \sigma}{\pi^2R+4R^2\sigma},
$$
which is the desired inequality. 

\medskip

Case $\sigma<0$. In that case $\lambda\doteq\lambda_1<0$ and an associated eigenfunction is of type:
$$
u(r)=a\sinh(\sqrt{\abs{\lambda}}r)+b\cosh(\sqrt{\abs{\lambda}}r).
$$
The boundary conditions give
$$
\frac{b}{a}=\frac{\sqrt{\abs{\lambda}}}{\sigma}=-\coth(R\sqrt{\abs{\lambda}}),
$$
so that, if $x=R\sqrt{\abs{\lambda}}$, then $R\sigma\coth x+x=0$, which means that 
$R\sqrt{\abs{\lambda_1}}=c_1$, where $c_1$ is the unique positive root of $\phi(x)\doteq R\sigma\coth x+x$. Now:
$$
0=\phi(c_1)=R\sigma\coth c_1+c_1< R\sigma+c_1,
$$
hence $c_1>-R\sigma$; consequently $\sqrt{\abs{\lambda_1}}>-\sigma=\abs{\sigma}$ and squaring both sides we get the assertion. 

\end{proof}


%



\section{Proof of domain monotonicity}\label{monotonicity}

In this section we prove Theorem \ref{dmintro}.

\subsection{The first Robin eigenvalue of a revolution manifold} Let $B(x_0,R)$ be a geodesic ball centered at the pole $x_0$ of a revolution manifold.
It is a standard fact that, by rotational invariance,  the first eigenfunction of the Robin Laplacian on $B(x_0,R)$ is radial:  $v=v(r)$ where $r$ is the distance to the pole; consequently,  the first eigenvalue of $B(x_0,R)$ with parameter $\sigma$ is the first eigenvalue of the following one-dimensional problem:
\begin{equation}\label{onedtwo}
\threesystem
{v''+(n-1)\dfrac{\Phi'}{\Phi}v'+\lambda v=0}
{v'(0)=0}
{v'(R)=-\sigma v(R)}
\end{equation}
Note that the condition $v'(0)=0$ is imposed to have regularity at $x_0$.
For example, for the geodesic ball in hyperbolic space ${\bf H}^n$ we have $\Phi(r)=\sinh r$ hence the problem becomes:
\begin{equation}\label{hypballs}
\threesystem
{v''+(n-1)(\coth r)v'+\lambda v=0}
{v'(0)=0}
{v'(R)=-\sigma v(R).}
\end{equation}

It will be convenient to parametrize instead by the distance $\rho$ to the boundary of $B(x_0,R)$. As $\rho=R-r$ we set:
$$
u(r)=v(R-r), \quad \Theta(r)=\Phi(R-r)^{n-1}.
$$
Note that $\Theta$ is positive on $[0,R)$; a calculation shows:
$$
\dfrac{\Theta'(r)}{\Theta(r)}=-(n-1)\dfrac{\Phi'(R-r)}{\Phi(R-r)},\quad 
(\log\Theta)''(r)=(n-1)(\log\Phi)''(R-r).
$$
Therefore,  $\Phi$ is log-concave if and only if  $\Theta$ is log-concave. Problem \eqref{onedtwo} becomes the equivalent problem:
\begin{equation}\label{onedthree}
\threesystem
{u''+\dfrac{\Theta'}{\Theta}u'+\lambda u=0}
{u'(0)=\sigma u(0)}
{u'(R)=0.}
\end{equation}


\subsection{Domain monotonicity} We now prove Theorem \ref{dmintro}:

\begin{theorem}\label{dm} Let $\Omega$ be a domain of a revolution manifold $M$ with pole $x_0$, whose warping function $\Phi$ is log-concave.  Assume that $\Omega\subseteq B(x_0,R)$. If $\sigma>0$ then
$$
\lambda_1(\Omega,\sigma)\geq \lambda_1(B(x_0,R),\sigma),
$$
while if $\sigma<0$ then the opposite inequality holds:
$
\lambda_1(\Omega,\sigma)\leq \lambda_1(B(x_0,R),\sigma),
$
\end{theorem}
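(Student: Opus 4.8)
The plan is to mimic the structure of the proof of the comparison theorem (Section \ref{prooflb}), replacing the Laplacian comparison estimate \eqref{hk} by an exact computation available on a revolution manifold, and using the log-concavity hypothesis exactly as in parts d) and e) of Lemma \ref{oned}. Let $R$ be such that $\Omega\subseteq B(x_0,R)$, let $\bar\lambda=\lambda_1(B(x_0,R),\sigma)$, and let $v=v(r)$ be a positive first radial eigenfunction of problem \eqref{onedtwo}. As above, reparametrize by the distance to $\bd B(x_0,R)$: set $u(r)=v(R-r)$ and $\Theta(r)=\Phi(R-r)^{n-1}$, so that $u$ solves \eqref{onedthree}, the weight $\Theta$ is log-concave on $[0,R)$ since $\Phi$ is, and $u'(0)=\sigma u(0)$, $u'(R)=0$.

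Next I would pull the one-dimensional eigenfunction back to $\Omega$ using the distance to the \emph{pole}: let $r_0=\dist(\cdot,x_0)$ be the distance-to-pole function of $M$ (smooth away from $x_0$, with $\abs{\nabla r_0}=1$), put $\rho_0 = R - r_0$, and define $w = u\circ\rho_0 = v\circ r_0$ on $\Omega$. Since $\Omega\subseteq B(x_0,R)$, the function $\rho_0$ takes values in $(R-R, R]\subseteq(0,R]$ on $\Omega$ — wait, more precisely $\rho_0\in[0,R]$, with $\rho_0 = 0$ only possibly on $\bd\Omega$ where it meets $\bd B(x_0,R)$, and $\rho_0 = R$ at the pole if the pole lies in $\Omega$. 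The key point is that on a revolution manifold one has the \emph{exact} identity $\Delta r_0 = -(n-1)\frac{\Phi'}{\Phi}(r_0)$ away from the pole (this is the mean curvature formula recalled before Theorem \ref{dmintro}), equivalently $\Delta_{\rm reg}\rho_0 = -\frac{\Theta'}{\Theta}\circ\rho_0$, and there is no cut-locus contribution interior to $B(x_0,R)$. Hence, exactly as in Lemma \ref{tech}b, $\Delta w = -u''\circ\rho_0 + (u'\circ\rho_0)\,\Delta\rho_0 = \big(-u'' - \tfrac{\Theta'}{\Theta}u'\big)\circ\rho_0 = \bar\lambda\, w$; that is, $w$ is an \emph{exact} solution of $\Delta w = \bar\lambda w$ on $\Omega$ (not merely a supersolution), and $w>0$. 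Then I take a positive first Robin eigenfunction $f$ on $\Omega$, $\Delta f=\lambda_1(\Omega,\sigma)f$, $\derive fN = \sigma f$, multiply, subtract, and integrate via Green's formula (Lemma \ref{tech}c):
\begin{equation*}
\big(\lambda_1(\Omega,\sigma)-\bar\lambda\big)\int_\Omega fw = \int_{\bd\Omega}\Big(v\,\derive wN - w\,\derive fN\Big)\,dv_{n-1} = \int_{\bd\Omega}\Big(f\,\derive wN - \sigma\, fw\Big)\,dv_{n-1}.
\end{equation*}
On $\bd\Omega$ we have $\derive wN = u'(\rho_0)\,\dotp{\nabla\rho_0}{N} = \sigma(\rho_0)\,w\,\dotp{\nabla\rho_0}{N}$, where $\sigma(r)=u'(r)/u(r)$ as in Lemma \ref{oned}. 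Here is where log-concavity enters: by Lemma \ref{oned}d, when $\sigma>0$ we have $0\le\sigma(\rho_0)\le\sigma$, and since $N$ is the inner normal of $\Omega$ while $\rho_0$ increases toward the interior, $\dotp{\nabla\rho_0}{N}\le 1$ — more carefully, $\dotp{\nabla\rho_0}{N} = -\dotp{\nabla r_0}{N}$ and the latter is $\ge -1$, so $\dotp{\nabla\rho_0}{N}\le 1$; combined with $w>0$, $f>0$, this gives $f\,\derive wN \le \sigma\, fw$ pointwise on $\bd\Omega$, whence the boundary integral is $\le 0$ and therefore $\lambda_1(\Omega,\sigma)\ge\bar\lambda$. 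For $\sigma<0$ one uses Lemma \ref{oned}e ($\sigma\le\sigma(\rho_0)\le 0$) and the same sign of $\dotp{\nabla\rho_0}{N}$ to reverse the inequality.

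The main obstacle is the boundary term: unlike in Theorem \ref{compintro}, where $\rho$ is the distance to $\bd\Omega$ and $\nabla\rho = N$ exactly on $\bd\Omega$ (making the boundary term vanish), here $w$ is built from the distance to the pole, so $\derive wN$ does not match $\sigma w$ on $\bd\Omega$ and one must control its sign. This is precisely why log-concavity of $\Phi$ is needed — it forces the monotonicity $0\le\sigma(r)\le\sigma$ of the logarithmic derivative of $u$ (Lemma \ref{oned}d,e), which, together with the elementary estimate $\dotp{\nabla r_0}{N}\ge -1$ and the positivity of $w$ and $f$, pins down the sign of the boundary integrand. I would also need to note briefly that $w = v\circ r_0$ is genuinely $C^2$ across the pole when the pole lies in $\Omega$ (using $v'(0)=0$ and the smoothness conditions on $\Phi$), so that the computation $\Delta w = \bar\lambda w$ and Green's formula are valid on all of $\Omega$; this is routine given the regularity of the warping metric at the pole.
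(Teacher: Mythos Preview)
Your approach is essentially the paper's: both restrict the positive radial first eigenfunction $\phi=v\circ r_0$ of $B(x_0,R)$ to $\Omega$ and use Lemma~\ref{oned}d,e (via log-concavity of $\Theta$) together with the Cauchy--Schwarz bound $\scal{\nabla\rho_0}{N}\le 1$ to obtain the key boundary inequality $\derive{\phi}{N}\le\sigma\phi$ on $\bd\Omega$; the paper then reads this as saying that $\phi|_\Omega$ is the first eigenfunction of a Robin problem on $\Omega$ with variable parameter $\sigma^\star\le\sigma$ and concludes by monotonicity of $\lambda_1$ in the boundary parameter, whereas you equivalently pair $\phi$ against a first eigenfunction $f$ of $(\Omega,\sigma)$ via Green's formula. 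One slip to correct: Green's identity gives $(\bar\lambda-\lambda_1(\Omega,\sigma))\int_\Omega fw$ on the left (not $\lambda_1-\bar\lambda$), and the stray $v$ in your boundary integrand should be $f$; with the sign fixed the conclusion follows exactly as you claim.
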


Note that the theorem applies in any space-form $M_K$.

\begin{proof} Assume first $\sigma>0$ and set for brevity $B\doteq B(x_0,R)$ and $\lambda=\lambda_1(B(x_0,R),\sigma)$. The first positive eigenfunction $\phi$ of $B$ is radial, and depends only on the distance to the boundary of $B$, hence it is written $\phi=u\circ\rho$ where $\rho$ is the distance function to the boundary. Therefore:
$$
 \nabla\phi=(u'\circ\rho)\nabla\rho
$$
where $u$ solves \eqref{onedthree}. 
Define a function $\sigma^{\star}:\bd\Omega\to\reals$ by the rule:
\begin{equation}\label{sigmastar}
\sigma^{\star}(x)=\dfrac{1}{\phi(x)}\derive{\phi}{N}(x),
\end{equation}
where $N$ is the inner unit normal to $\bd\Omega$. Set $\rho(x)=r$ and observe that, by Lemma \ref{oned}, $u'\geq 0$. Then:
$$
\derive{\phi}{N}(x)=\scal{\nabla\phi(x)}{N(x)}=u'(r)\scal{\nabla\rho (x)}{N(x)}\leq u'(r),
$$
because $\nabla\rho$ is of unit length so that $\scal{\nabla\rho}{N}\leq 1$.
This means that
$$
\dfrac{1}{\phi(x)}\derive{\phi}{N}(x)\leq \dfrac{u'(r)}{u(r)}.
$$
Since $\Theta$ is log-concave, again by Lemma \ref{oned} we see:
$$
\dfrac{u'(r)}{u(r)}\leq\sigma \quad\text{hence also}\quad \sigma^{\star}(x)\leq \sigma,
$$
for all $x\in\bd\Omega$. Now the restriction of $\phi$ to $\Omega$ which, by a slight abuse of language, we keep denoting by $\phi$, satisfies:
$$
\twosystem
{\Delta\phi=\lambda \phi\quad\text{on}\quad \Omega}
{\derive{\phi}{N}=\sigma^{\star}\phi\quad\text{on}\quad \bd\Omega.}
$$
Since $\phi$ is positive on $\Omega$, it is the first eigenfunction of that problem, hence:
$$
\lambda\doteq\lambda_1(B(x_0,R),\sigma)=\lambda_1(\Omega,\sigma^{\star}).
$$
But now, as $\sigma^{\star}(x)\leq\sigma$ we immediately see from monotonicity that
$
\lambda_1(\Omega,\sigma^{\star})\leq \lambda_1(\Omega,\sigma),
$
and the assertion follows. 

\medskip

Now assume $\sigma<0$. The proof in that case is similar, with all signs reversed; in particular $u'\leq 0$, $\sigma(r)\leq 0$ etc. One defines $\sigma^{\star}(x)$ as before, and since $\scal{\nabla\rho}{N}\leq 1$ we see
$$
\derive{\phi}{N}(x)\geq u'(r)
$$
hence
$
\sigma^{\star}(x)\geq \sigma.
$
By considering the restriction of $\phi$ to $\Omega$ as before  we conclude from monotonicity that
$
\lambda_1(B(x_0,r),\sigma^{\star})=\lambda_1(\Omega,\sigma^{\star})\geq \lambda_1(\Omega,\sigma),
$
and the assertion follows. 

\end{proof}


\section{Proof of McKean-type inequality}\label{mckean} In this section we prove Theorem \ref{mckeanintro} and Theorem \ref{twotermasy}. The idea is simply to work with the ODE \eqref{hypballs} and approximate $\coth r$ by $1$.


\subsection{A preparatory lemma} 

\begin{lemme} \label{preplemma} Given a positive constant $A$, consider the following mixed Robin problem on $[0,R]$:
 \begin{equation}\label{robinsimpletwo}
\threesystem
{u''+2Au'+\lambda u=0}
{u'(0)=0}
{u'(R)=-\sigma u(R)}
\end{equation}

\parte a One has, for all $R>0$:
$$
\lambda_1\geq \twosystem
{2\sigma A-\sigma^2\quad\text{if}\quad 0\leq\sigma\leq A}
{A^2\quad\text{if}\quad \sigma\geq A}
$$

\parte b Now assume $\sigma<0$. Then, for all $R>0$:
$$
\lambda_1\leq -\sigma^2+2A\sigma.
$$

\parte c Assume $\sigma>A$. There are positive constants $R_0,c_0$ depending only on $A$ and $\sigma$ such that, for all $R\geq R_0$ one has:
$$
\lambda_1\geq A^2+\frac{\pi^2}{R^2}-\frac{c_0}{R^3}.
$$
\end{lemme}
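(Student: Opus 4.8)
The plan is to analyze the ODE $u''+2Au'+\lambda u=0$ explicitly. Its characteristic equation is $t^2+2At+\lambda=0$, with roots $t=-A\pm\sqrt{A^2-\lambda}$. The key substitution is $u(r)=e^{-Ar}w(r)$, which removes the first-order term: $w$ then satisfies $w''+(\lambda-A^2)w=0$. So the sign of $\lambda-A^2$ dictates whether $w$ is trigonometric, linear, or hyperbolic, and the boundary conditions $u'(0)=0$, $u'(R)=-\sigma u(R)$ translate into conditions on $w$, namely $w'(0)=Aw(0)$ and $w'(R)=(A-\sigma)w(R)$. I would first record the min-max characterization
\begin{equation*}
\lambda_1=\inf_{u\in H^1[0,R]}\frac{\int_0^R u'^2\,e^{2Ar}\,dr+\sigma u(R)^2e^{2AR}}{\int_0^R u^2\,e^{2Ar}\,dr},
\end{equation*}
since the weighted measure here is $e^{2Ar}\,dr$ (the problem has the form of \eqref{onedrobin} with $\Theta(r)=e^{2Ar}$, up to reflecting the interval), and note that $\Theta$ is log-\emph{convex}, not concave, so Lemma~\ref{oned}d,e do not apply directly — this is the one point requiring care.

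For part~a) with $0\le\sigma\le A$, I would test with the exponential trial function $u(r)=e^{(\sigma-A)r}$; a direct computation of the Rayleigh quotient should give exactly $2\sigma A-\sigma^2$ as an upper bound, and then the lower bound $\lambda_1\ge 2\sigma A-\sigma^2$ must come from showing this trial function is in fact (a multiple of) the genuine first eigenfunction, or equivalently from a completion-of-square argument: write $\int u'^2e^{2Ar}+\sigma u(R)^2e^{2AR}=\int (u'+(A-\sigma)u)^2e^{2Ar}-\ldots$ and integrate by parts to extract the constant term $(2\sigma A-\sigma^2)\int u^2 e^{2Ar}$ plus a nonnegative remainder. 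For $\sigma\ge A$ the same kind of completion of square, now splitting off $A^2$ via $\int(u'+Au)^2e^{2Ar}$, should yield $\lambda_1\ge A^2$; one checks the boundary term has the right sign precisely when $\sigma\ge A$. Part~b) is the mirror image: for $\sigma<0$ the exponential $u(r)=e^{(\sigma-A)r}$ is again an admissible trial function and its Rayleigh quotient equals $-\sigma^2+2A\sigma$, giving the upper bound on $\lambda_1$ immediately from min-max; no lower bound is needed here.

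For part~c), assume $\sigma>A$, so by part~a) we already know $\lambda_1\ge A^2$; the task is the refined lower bound $\lambda_1\ge A^2+\pi^2/R^2-c_0/R^3$. Set $\mu=\lambda_1-A^2\ge 0$; then $w=e^{Ar}u$ solves $w''+\mu w=0$ with $w'(0)=Aw(0)$ and $w'(R)=(A-\sigma)w(R)$. Writing $w(r)=\cos(\sqrt\mu r)+\frac{A}{\sqrt\mu}\sin(\sqrt\mu r)$ (normalizing $w(0)=1$) and imposing the condition at $R$ gives a transcendental equation $F(\sqrt\mu, R)=0$; with $x=R\sqrt\mu$ it should reduce to something like $\tan x = \frac{x(\sigma R + \text{const})}{x^2 - A\sigma R^2 + \ldots}$, whose smallest positive root $x$ converges to $\pi$ as $R\to\infty$ because the dominant balance forces $\cos x\to$ a definite sign configuration. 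Expanding this root as $x=\pi - a/R + O(1/R^2)$ and solving for $a$ (the term linear in $1/R$ inside the equation, coming from the $\sigma$-dependent boundary data, produces a genuine $O(1/R)$ correction to $x$, hence an $O(1/R^3)$ correction to $\mu = x^2/R^2$) yields $\mu \ge \pi^2/R^2 - c_0/R^3$ for $R\ge R_0$, with $R_0,c_0$ extractable from the remainder estimates in the expansion.

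The main obstacle I anticipate is part~c): controlling the smallest positive root of the transcendental boundary equation uniformly in $R$ and pinning down that it is $\pi/R - O(1/R^2)$ from \emph{below} rather than merely asymptotically — this requires a careful monotonicity or sign analysis of the function $F$ near $x=\pi$ for all large $R$, plus explicit (not merely asymptotic) control of the $O(1/R^2)$ remainder so that $R_0$ and $c_0$ can be named. Parts~a) and~b) are, by contrast, essentially a one-line trial function together with a completion-of-square bookkeeping, and I would expect them to be routine once the weight $e^{2Ar}$ and the correct trial exponential $e^{(\sigma-A)r}$ are identified.
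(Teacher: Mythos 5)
Your overall skeleton (the substitution $u=e^{-Ar}w$ turning the equation into $w''+(\lambda-A^2)w=0$, then a secular equation for $x=R\sqrt\mu$) is the same one the paper uses; for part c) your plan essentially coincides with the paper's argument, including the key point that one must control the first root of the transcendental equation from below, not just asymptotically. For parts a) and b), however, you propose a genuinely different, variational route (trial functions and completion of squares in the Rayleigh quotient with weight $e^{2Ar}$), whereas the paper solves the ODE exactly — the Neumann condition at $0$ forces $u(x)=e^{-Ax}(A\sinh(qx)+q\cosh(qx))$ with $q=\sqrt{A^2-\lambda}$ — and reads off the secular equation $\lambda=\sigma\bigl(A+q\coth(qR)\bigr)$, from which $\coth\geq 1$ plus an elementary squaring gives all of a) and b) in a few lines. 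Your variational idea is workable, but the specific formulas you wrote are wrong and would derail the proof.

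Concretely: the trial exponential should be $e^{-\sigma r}$, not $e^{(\sigma-A)r}$. Indeed $e^{cr}$ solves $u''+2Au'+\lambda u=0$ with $\lambda=-c^2-2Ac$; for $c=\sigma-A$ that gives $\lambda=A^2-\sigma^2$, not $2\sigma A-\sigma^2$, and it satisfies neither boundary condition, so its Rayleigh quotient is not the clean number you expect. By contrast $c=-\sigma$ gives $\lambda=2\sigma A-\sigma^2$ and the Robin condition $u'(R)=-\sigma u(R)$ holds exactly, which is why $e^{-\sigma r}$ is the right trial function for part b) (it yields Rayleigh quotient $<2\sigma A-\sigma^2$ when $\sigma<0$). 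Similarly, your completion of square $(u'+(A-\sigma)u)^2$ extracts the constant $2A(A-\sigma)-(A-\sigma)^2=A^2-\sigma^2$ and a boundary coefficient $A-\sigma$, not the pair $(2\sigma A-\sigma^2,\ \sigma)$ you need; and one can check that for no $\sigma\in[0,A]$ does this choice simultaneously give the right constant and a boundary term that can be absorbed with the correct sign. The choice that actually works is $\int_0^R(u'+\sigma u)^2e^{2Ar}\,dr\geq0$, which after integrating by parts gives $\int u'^2 e^{2Ar}+\sigma e^{2AR}u(R)^2\geq(2\sigma A-\sigma^2)\int u^2e^{2Ar}+\sigma u(0)^2$, valid and useful for $0\leq\sigma\leq A$; and $\int_0^R(u'+Au)^2e^{2Ar}\,dr\geq 0$ gives $\int u'^2 e^{2Ar}+Ae^{2AR}u(R)^2\geq A^2\int u^2e^{2Ar}+Au(0)^2$, which combined with $\sigma\geq A$ yields the $A^2$ bound. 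With those two corrections your variational route for a) and b) closes; as written it does not.
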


\begin{proof} We set, for brevity, $\lambda\doteq\lambda_1$ and assume, at first, that  $\lambda<A^2$. Then, the solutions of the ODE:
\begin{equation}\label{ODE}
\twosystem
{u''+2Au'+\lambda u=0}
{u'(0)=0}
\end{equation}
are all multiples of the function:
\begin{equation}\label{sinh}
u(x)=e^{-Ax}\Big(A\sinh(qx)+q\cosh(qx)\Big)
\end{equation}
where
$
q=\sqrt{A^2-\lambda}.
$
One computes
$
u'(x)=-\lambda e^{-Ax}\sinh(qx),
$
and the boundary condition $u'(R)=-\sigma u(R)$ gives:
\begin{equation}\label{exact}
\lambda=\sigma(A+q\coth(qR)).
\end{equation}
\parte a Assume $\sigma\geq A$. As $\coth(qR)>1$ one has 
$
\lambda>\sigma A.
$
By hypothesis we have also $\lambda<A^2$, hence  we see that
$
A^2>\sigma A
$
hence $\sigma<A$: contradiction. 

The conclusion is that
 if $\sigma\geq A$ then $\lambda\geq A^2$.

\smallskip

Now assume $\sigma\leq A$. Then either $\lambda\geq A^2$ (and then, a fortiori, $\lambda\geq 2\sigma A-\sigma^2$, and we are done) or  $\lambda< A^2$ and the eigenfunction is as in \eqref{sinh}. In that case, equation \eqref{exact} implies
$
\lambda\geq \sigma A+\sigma q,
$
that is:
$$
\lambda-\sigma A\geq \sigma\sqrt{A^2-\lambda}.
$$
Squaring both sides we see
$$
\lambda\geq 2\sigma A-\sigma^2.
$$
In both cases we see that, if $\sigma\leq A$ then
$
\lambda\geq 2\sigma A-\sigma^2,
$
as asserted.

\smallskip

\parte b If $\sigma<0$  then $\lambda<0$; the exact expression \eqref{exact} can be written:
$$
\abs{\lambda}=\abs{\sigma} A+\abs{\sigma}q\coth(qR).
$$
Hence 
$
\abs{\lambda}-\abs{\sigma}A\geq\abs{\sigma}q.
$
Squaring both sides and proceeding as before we arrive at the upper bound
$
\lambda\leq -\sigma^2+2\sigma A.
$
\smallskip

It remains to show c). By assumption $\sigma>A$ hence $\lambda>A^2$ by a). The solutions of \eqref{ODE} are now multiples of:
$$
u(x)=e^{-Ax}\Big(A\sin(qx)+q\cos(qx)\Big), \quad\text{where}\quad q=\sqrt{\lambda-A^2}.
$$
Then $u'(x)=-\lambda e^{-Ax}\sin(qx)$ and the boundary condition $u'(R)=-\sigma u(R)$ gives 
$
\lambda=\sigma A+\sigma q\cot(qR).
$
Hence 
$$
\lambda-A^2=\sigma q\cot(qR)+\sigma A-A^2;
$$
multiplying by $R^2$:
$$
q^2R^2=(\sigma R) (qR)\cot(qR)+(\sigma A-A^2)R^2.
$$
Setting $t=qR$ we see:
$$
t^2-\sigma R t\cot t-(\sigma A-A^2)R^2=0.
$$
Conclude that  the eigenvalues are given by
$$
\lambda_k=A^2+\dfrac{x_k^2}{R^2}
$$
where $\{x_1, x_2,\dots\}$ is the sequence of positive zeroes of the function
$$
\phi(x)=x^2-\alpha x\cot x-\beta
$$
where  $\alpha=\sigma R$ and $\beta=(\sigma A-A^2)R^2$.

\smallskip

We need to estimate $x_1$. Set:
$$
R_0\doteq\max\{\dfrac{2\pi}{\sqrt{\sigma A-A^2}}, \dfrac{4\sigma}{3(\sigma A-A^2)}\}.
$$
If $x_1\geq \pi$ inequality c) follows immediately. Then, in what follows we will assume $x_1<\pi$. We first want to show that if $R\geq R_0$ then $x_1>\frac{\pi}2$. In fact, in that case 
$
\beta\doteq (\sigma A-A^2)R^2\geq 4\pi^2
$
hence, as $x_1^2=\alpha x_1\cot x_1+\beta$ (by definition), one has
$$
x_1^2\geq \alpha x_1\cot x_1+4\pi^2.
$$
If $\cot x_1\geq 0$ the inequality gives $x_1> 2\pi$ and c) follows; then $\cot x_1<0$ so that   $x_1> \frac{\pi}{2}$ and, by our initial assumption
$$
x_1\in \Big(\frac{\pi}2,\pi\Big).
$$
The definition of $x_1$ gives
$
\cot x_1=\dfrac{x_1^2-\beta}{\alpha x_1},
$
or, equivalently,
$
\tan x_1=\dfrac{\alpha x_1}{x_1^2-\beta}.
$
As $\pi-x_1\in (0,\frac{\pi}2)$ and $\tan (\pi-x_1)=-\tan x_1$ it holds:
$$
\tan (\pi-x_1)=\dfrac{\alpha x_1}{\beta-x_1^2}.
$$
Considering that $\tan (\pi-x_1)\geq \pi-x_1$ we arrive at:
\begin{equation}\label{pixone}
\pi-x_1\leq \dfrac{\alpha x_1}{\beta-x_1^2}.
\end{equation}
Since $x_1\leq  \pi$:
\begin{equation}\label{pixtwo}
\dfrac{\alpha x_1}{\beta-x_1^2}=\dfrac{\sigma Rx_1}{(\sigma A-A^2)R^2-x_1^2}\leq
\dfrac{\pi\sigma R}{(\sigma A-A^2)R^2-\pi^2}.
\end{equation}

If $R\geq R_0$ then $R\geq \frac{2\pi}{\sqrt{\sigma A-A^2}}$ and one checks that
\begin{equation}\label{pixthree}
\dfrac{\pi\sigma R}{(\sigma A-A^2)R^2-\pi^2}\leq \dfrac{4\pi\sigma}{3(\sigma A-A^2)}\cdot\dfrac{1}{R}.
\end{equation}
By \eqref{pixone}, \eqref{pixtwo} and \eqref{pixthree}:
$$
x_1\geq \pi-\dfrac{4\pi\sigma}{3(\sigma A-A^2)}\cdot\dfrac 1R,
$$
If $R\geq R_0$ the right hand side is non-negative, and squaring both sides we see that
$$
x_1^2\geq \pi^2-\dfrac{c_0}{R}, \quad\text{with}\quad c_0=\dfrac{8\pi\sigma}{3(\sigma A-A^2)}.
$$
Eventually, when $R\geq R_0$,  we obtain:
$$
\dfrac{x_1^2}{R^2}\geq \dfrac{\pi^2}{R^2}-\dfrac{c_0}{R^3},
$$
with
$$
R_0\doteq\max\Big\{\dfrac{2\pi}{\sqrt{\sigma A-A^2}}, \dfrac{4\sigma}{3(\sigma A-A^2)}\Big\}, \quad\text{and}\quad c_0=\dfrac{8\pi\sigma}{3(\sigma A-A^2)}.
$$
The proof is complete.

\end{proof}


\subsection{Proof of Theorems \ref{mckeanintro} and \ref{twotermasy}}\label{fiveandsix}

Let $\Omega$ be a (bounded) domain in ${\bf H}^n$. We first assume $\sigma>0$. Now $\Omega\subseteq B(x_0,R)$ for a suitable ball; by the monotonicity proved in Theorem \ref{dm} we see that, for $\sigma>0$:
$$
\lambda_1(\Omega,\sigma)\geq \lambda_1(B(x_0,R),\sigma).
$$
 Therefore, we proceed to estimate the first Robin eigenvalue of hyperbolic balls of radius $R$, which is the following problem on $[0,R]$ (see \eqref{hypballs}):
\begin{equation}\label{hypball}
\threesystem
{v''+(n-1)(\coth r)v'+\lambda v=0}
{v'(0)=0}
{v'(R)=-\sigma v(R).}
\end{equation}
As remarked before, the change $u(r)=v(R-r)$ transforms \eqref{hypballs} in the problem \eqref{onedthree} of the type considered in Lemma \ref{oned}: one then has $u'\geq 0$ on $[0,R]$ hence $v'\leq 0$ on $[0,R]$. As $\coth r\geq 1$ we see that 
$$
v''+(n-1)v'+\lambda v\geq 0.
$$
Note that if $\sigma<0$ the inequality is reversed. The conclusion is that 

\begin{lemme} \label{linear} If $\sigma>0$ (resp. $\sigma<0$)  then $\lambda_1(B(x_0,R),\sigma)$ is larger than or equal to (resp. less than or equal to) the first eigenvalue of the problem:
\begin{equation}\label{linearproblem}
\threesystem
{v''+(n-1)v'+\lambda v=0}
{v'(0)=0}
{v'(R)=-\sigma v(R).}
\end{equation}
\end{lemme}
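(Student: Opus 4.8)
The plan is to exploit the comparison between the ODE with coefficient $(n-1)\coth r$ and the constant-coefficient ODE with coefficient $n-1$, using the sign of $v'$ that we have already established. Recall that the substitution $u(r)=v(R-r)$ turns problem \eqref{hypball} into problem \eqref{onedthree}, to which Lemma \ref{oned} applies; hence when $\sigma>0$ we have $u'\geq 0$ on $[0,R]$, equivalently $v'\leq 0$ on $[0,R]$. Since $\coth r\geq 1$ for $r>0$ and $v'\leq 0$, we get $(n-1)(\coth r)v'\leq (n-1)v'$, so a first eigenfunction $v$ of \eqref{hypball}, satisfying $v''+(n-1)(\coth r)v'+\lambda_1 v=0$, obeys
$$
v''+(n-1)v'+\lambda_1 v = (n-1)v'-(n-1)(\coth r)v'\geq 0.
$$
The idea is then to use this $v$ as a test object for problem \eqref{linearproblem}, but the natural framework is not the raw min-max quotient (because \eqref{linearproblem} is a weighted problem with weight $e^{(n-1)r}$, not $\Phi^{n-1}=\sinh^{n-1}r$), so I would instead argue by a comparison-of-eigenfunctions device exactly as in the proof of Theorem \ref{compintro}.

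Concretely: let $\mu_1$ denote the first eigenvalue of \eqref{linearproblem} with first positive eigenfunction $w$, which solves $w''+(n-1)w'+\mu_1 w=0$, $w'(0)=0$, $w'(R)=-\sigma w(R)$. Pair the differential inequality for $v$ against $w$ with respect to the weight $\Theta_0(r)\doteq e^{(n-1)r}$: since $v''+(n-1)v'=(e^{(n-1)r}v')'e^{-(n-1)r}$, multiplying the inequality $v''+(n-1)v'+\lambda_1 v\geq 0$ by $w\,\Theta_0$ and the equation $w''+(n-1)w'+\mu_1 w=0$ by $v\,\Theta_0$, subtracting and integrating over $[0,R]$ gives, after integrating by parts twice, a boundary term $\big[\Theta_0(vw'-wv')\big]_0^R$ which vanishes: at $r=0$ because $v'(0)=w'(0)=0$, and at $r=R$ because both $v'(R)=-\sigma v(R)$ and $w'(R)=-\sigma w(R)$ so $v(R)w'(R)-w(R)v'(R)=0$. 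One is left with $(\mu_1-\lambda_1)\int_0^R vw\,\Theta_0\,dr\leq 0$, and since $v,w,\Theta_0>0$ this forces $\lambda_1\geq\mu_1$, which is precisely the claim for $\sigma>0$. For $\sigma<0$, Lemma \ref{oned}(b) gives $u'\leq 0$, i.e.\ $v'\geq 0$, so all the inequalities reverse and we obtain $\lambda_1\leq\mu_1$.

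The main subtlety — and the only place requiring care — is the choice of weight in the pairing and the verification that the boundary terms cancel. One must match the weight $\Theta_0=e^{(n-1)r}$ to the operator $v\mapsto -v''-(n-1)v'$ so that it becomes self-adjoint in $L^2([0,R],\Theta_0\,dr)$, and then check that the two boundary contributions at $r=0$ and $r=R$ both vanish precisely because $v$ and $w$ satisfy the \emph{same} Neumann condition at $0$ and the \emph{same} Robin condition at $R$ (this is where it is essential that we fixed $w$ to be the first eigenfunction of \eqref{linearproblem} with the same $\sigma$, not some other comparison problem). Everything else — positivity of $v$, $w$, $\Theta_0$, and the regularity needed to integrate by parts — is routine. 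No genuine obstacle arises; the lemma is essentially the one-dimensional shadow of the Laplacian-comparison argument already used for Theorem \ref{compintro}, with $\coth r\geq 1$ playing the role of the Heintze--Karcher inequality \eqref{hk}.
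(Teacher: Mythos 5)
Your argument is correct and is exactly the paper's intended route: the paper establishes $v'\leq 0$ and the pointwise inequality $v''+(n-1)v'+\lambda v\geq 0$ via $\coth r\geq 1$, and then states the conclusion without spelling out the final step, which is precisely the Green's-identity pairing against the first eigenfunction $w$ of \eqref{linearproblem} in the weighted space $L^2([0,R],e^{(n-1)r}\,dr)$ that you carry out (mirroring the proof of Theorem \ref{compintro}). No gap.
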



\subsection{Proof of Theorem \ref{mckeanintro}} Given Lemma \ref{linear}, we apply Lemma \ref{preplemma} for $A=\frac{n-1}2$. If $\sigma>0$ we get immediately:
$$
\lambda_1(\Omega,\sigma)\geq\twosystem{\dfrac{(n-1)^2}{4}\quad\text{if}\quad \sigma\geq \dfrac{n-1}{2}}
{(n-1)\sigma-\sigma^2\quad\text{if}\quad 0<\sigma\leq\dfrac{n-1}{2}}
$$
If instead we assume $\sigma<0$, then
$
\lambda_1(\Omega,\sigma)\leq -\sigma^2+(n-1)\sigma.
$


\subsection{Proof of Theorem \ref{twotermasy}} Again, we  apply Lemma \ref{preplemma}c for $A=\frac{n-1}{2}$. We obtain:
$$
\lambda_1(B_R,\sigma)\geq \frac{(n-1)^2}{4}+\frac{\pi^2}{R^2}-\frac{c_0}{R^3}
$$
for $R\geq R_0$. The upper bound in Theorem \ref{twotermasy} follows because 
$\lambda_1(B_R,\sigma)\leq\lambda_1^D(B_R)$ and the upper bound in \eqref{infinity}. 


\section{Appendix : model domains} \label{appendix}


We wish to construct, for any choice of $K,H$ and $R$, an n-dimensional domain $\bar\Omega\doteq\bar\Omega(K,H,R)$ with boundary components $\Gamma_1$ and $\Gamma_2$ such that the first eigenvalue of $\bar\Omega$ with Robin conditions on $\Gamma_1$ and Neumann conditions on $\Gamma_2$ coincides with $\lambda_1(R,\Theta,\sigma)$.

\medskip

{\bf Case 1.} It covers three distinct situations: 
a) $K>0$ and $H\in\reals$, 
b) $K=0$ and  $H\ne 0$, 
c) $K<0$ and  $\abs{H}>\sqrt{\abs K}$. 

\smallskip

In all these cases $\bar\Omega$ will be an annulus in $M_K$, the simply connected manifold with constant curvature $K$. 
Recall that the  $M_K$ is a revolution manifold with metric 
$
g=dr^2+s_K(r)^2g_{\sphere{n-1}}
$
and that the coordinate $r$ is geodesic distance to the pole $\{O\}$ of $M$. The mean curvature of the ball with center the pole and radius $r$, with respect to the inner unit normal $N=-\nabla r$ is
$$
H(r)=\cot_K(r)\doteq \dfrac{s'_K(r)}{s_K(r)}
$$
Let us set 
$
A=\cot^{-1}_K(\abs{H})
$
which is well-defined given our conditions on $H$. We remark that, if $\Omega$ has curvature data $(K,H)$, with $H>0$, then its inner radius $R\leq A$ (see \cite{S2}).  Our model domain will be the  annulus defined as follows:
$$
\bar\Omega=\bar\Omega(K,H,R)=
\twosystem
{A\leq r\leq A+R\quad\text{if $H<0$}}
{A-R\leq r\leq A\quad\text{if $H\geq 0$}}
$$
Note that the boundary of $\bar\Omega$ consists of two pieces; we call $\Gamma_1$ the component where $r=A$ and $\Gamma_2$ the other. One checks that the mean curvature of $\bd\bar\Omega$ is constant, equal to $H$, on $\Gamma_1$. 

\medskip

{\bf Case 2.} $H=0, K=0$. Then, we simply take the flat cylinder $\Omega=[0,R]\times \sphere{n-1}$, and let $\Gamma_1=\{0\}\times\sphere{n-1}$.

\medskip

{\bf Case 3.} $K<0, H\in(-\sqrt{\abs K}, \sqrt{\abs K})$. As ambient manifold we take the hyperbolic cylinder $\tilde M_K$, which is the rotationally invariant manifold $(-\infty,\infty)\times\sphere{n-1}$ with metric 
$$
g=dr^2+\Phi(r)^2\cdot g_{\sphere {n-1}}, \quad \Phi(r)\doteq s'_K(r)=\cosh(r\sqrt{\abs K}).
$$
The slice $\Sigma_r=\{r\}\times\sphere{n-1}$ is isometric to the sphere of radius $\cosh (r\sqrt{\abs K})$ and its mean curvature with respect to the normal $\nabla r$ is given by 
$$
H(r)=-\dfrac{\Phi'(r)}{\Phi(r)}=-\sqrt{\abs{K}}\tanh(r\sqrt{\abs K}).
$$
Given $H\in(-\sqrt{\abs K}, \sqrt{\abs K})$, we let $A=-\frac{1}{\sqrt{\abs K}}\tanh^{-1}(\frac{H}{\sqrt{\abs K}})$; we define
$$
\bar\Omega=[A,A+R]\times\sphere{n-1},
$$
and denote by $\Gamma_1$ the boundary component $\{A\}\times \sphere{n-1}$. One checks that $\Gamma_1$ has mean curvature $H$ with respect to the inner unit normal $N=\nabla r$ of $\bar\Omega$.

\medskip
{\bf Case 4.} $K<0, H=\pm\sqrt{\abs K}.$ These are the limiting cases of Case 3 as $H\to\pm\sqrt{\abs K}$.

\smallskip

With the above definitions, we can now state the following theorem.

\begin{theorem} Let $\lambda_1(\bar\Omega,\sigma)$ be the first eigenvalue of the problem
\begin{equation}\label{problembar}
\twosystem
{\Delta u=\lambda u\quad\text{on}\quad\bar\Omega}
{\derive uN=\sigma u \quad\text{on}\quad \Gamma_1, \quad\derive uN=0\quad\text{on}\quad \Gamma_2.}
\end{equation}
Then $\lambda_1(\bar\Omega,\sigma)=\lambda_1(R,\Theta,\sigma)$. 
\end{theorem}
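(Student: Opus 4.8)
The plan is to use the rotational symmetry of the model domain $\bar\Omega$ to reduce the mixed eigenvalue problem \eqref{problembar} to a one-dimensional problem on $[0,R]$, and then to identify that problem with \eqref{onedrobin} by means of the addition formulas for $s_K$.

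First I would note that in each of the Cases 1--4 the domain $\bar\Omega$ carries a warped-product metric $g=dr^2+f(r)^2g_{\sphere{n-1}}$, with $\Gamma_1=\{r=a\}\times\sphere{n-1}$ and $\Gamma_2=\{r=b\}\times\sphere{n-1}$ the two boundary slices; here $f=s_K$ in Case 1, $f\equiv 1$ in Case 2, and $f(r)=\cosh(r\sqrt{\abs K})=s'_K(r)$ in Cases 3--4. Exactly as for the Robin problem, $\lambda_1(\bar\Omega,\sigma)$ is simple with a positive first eigenfunction $\phi$; since $SO(n)$ acts by isometries leaving $\Gamma_1$ and $\Gamma_2$ invariant, for every $g\in SO(n)$ the function $\phi\circ g$ is again a positive first eigenfunction of the same $L^2$-norm, hence $\phi\circ g=\phi$ by simplicity. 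Therefore $\phi$ is rotationally invariant, a function of $r$ alone, and $\lambda_1(\bar\Omega,\sigma)$ equals the first eigenvalue of the radial ODE
$$
u''+(n-1)\dfrac{f'}{f}u'+\lambda u=0
$$
with $\derive uN=\sigma u$ at $\Gamma_1$ and $\derive uN=0$ at $\Gamma_2$.

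Next I would reparametrize by the distance $t$ to $\Gamma_1$: write $r=a+\eps t$, with $\eps\in\{+1,-1\}$ chosen so that increasing $t$ moves into $\bar\Omega$, so that the inner unit normal is $N=\eps\,\partial_r$ at $\Gamma_1$ and $N=-\eps\,\partial_r$ at $\Gamma_2$. One then checks that $w(t)\doteq u(a+\eps t)$ on $[0,R]$ satisfies $w'(0)=\sigma w(0)$, $w'(R)=0$ and
$$
w''+\dfrac{\Theta_\star'}{\Theta_\star}\,w'+\lambda w=0,\qquad \Theta_\star(t)\doteq f(a+\eps t)^{n-1}.
$$
Since only the logarithmic derivative of the weight (and not its normalization) enters both the equation and the boundary conditions, it remains to prove that $\Theta_\star=c\,\Theta$ for some positive constant $c$, with $\Theta$ the weight of \eqref{theta}. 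This identifies the $w$-problem with \eqref{onedrobin}, whence $\lambda_1(\bar\Omega,\sigma)=\lambda_1(R,\Theta,\sigma)$.

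The identity $\Theta_\star=c\,\Theta$ is where the choice of $A$ is used, and it is the step requiring genuine care. In Case 1 one has $A=\cot_K^{-1}(\abs H)$, i.e. $s'_K(A)=\abs H\,s_K(A)$; the addition formula $s_K(A\pm t)=s_K(A)s'_K(t)\pm s'_K(A)s_K(t)$ then gives, in either sign configuration, $f(a+\eps t)=s_K(A\pm t)=s_K(A)\big(s'_K(t)-Hs_K(t)\big)$, so $c=s_K(A)^{n-1}$. In Cases 3--4 one has $\tanh(A\sqrt{\abs K})=-H/\sqrt{\abs K}$, and the addition formula for $\cosh$ gives $f(A+t)=\cosh(A\sqrt{\abs K})\big(s'_K(t)-Hs_K(t)\big)$, so $c=\cosh(A\sqrt{\abs K})^{n-1}$; Case 2 is immediate, since $K=H=0$ force $\Theta\equiv\Theta_\star\equiv 1$. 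Conceptually this computation records the fact that in $M_K$ (resp. in the hyperbolic cylinder) the volume density of normal coordinates issued from a hypersurface of constant mean curvature $H$ is the unique solution of the associated Jacobi equation with value $1$ and logarithmic derivative $-(n-1)H$ at $t=0$, namely $\Theta(t)$; so the radial reduction of \eqref{problembar} is literally the weighted eigenvalue problem attached to the measure $\Theta(t)\,dt$. The only obstacle is the bookkeeping of the inner normals and the case split $H\gtrless 0$; beyond that everything is elementary.
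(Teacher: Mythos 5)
Your proposal is correct and follows essentially the same route as the paper: use rotational symmetry to reduce \eqref{problembar} to a radial ODE, reparametrize by distance to $\Gamma_1$, and identify the resulting weight with $\Theta$. The paper asserts this last identification by observing that $-\Theta'/\Theta$ is $(n-1)$ times the mean curvature of the level sets, while you carry it out explicitly via the addition formulas for $s_K$ and the defining relation for $A$; these are the same computation presented at different levels of detail.
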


\begin{proof} We let $\rho:\bar\Omega\to\reals$ be the distance of a point of $\bar\Omega$ to the component $\Gamma_1$ of $\bd\Omega$. From its definition, the cut-locus of $\Gamma_1$ in $\bar\Omega$ is empty, hence $\rho$ is $C^{\infty}-$smooth. Given the symmetries of $\bar\Omega$, the first eigenvalue of \eqref{problembar} is radial, and depends only on the distance to $\Gamma_1$, 
so it can be written $u=v\circ\rho$. Now
$$
\Delta u=-v''\circ\rho+(v'\circ\rho)\Delta\rho.
$$
As $\rho$ is smooth, we have
$
\Delta\rho=-\frac{\Theta'}{\Theta}\circ\rho,
$
which is $(n-1)$-times the mean curvature of the level set $\{\rho=r\}$. Computing the mean curvature of the level sets of $\bar\Omega$ one can check that in all of the above cases the formula holds with $\Theta(r)=(s'_K(r)-Hs_k(r))^{n-1}$, the weight function defined in \eqref{theta}. 
Ultimately one sees that the first eigenvalue of problem \eqref{problembar} coincides with the first eigenvalue of the problem on $[0,R]$:
\begin{equation}\label{barproblem}
\twosystem
{v''+\frac{\Theta'}{\Theta}v'+\lambda v=0}
{v'(0)=\sigma v(0), \quad v'(R)=0}
\end{equation}
and the assertion is proved. 
\end{proof}


\subsection{Proof of Theorem \ref{equalitycase}} In the cases at hand the model annulus $\bar\Omega$  is contained in a ball $\tilde\Omega$ of $M_K$, and moreover $\bd\tilde\Omega=\Gamma_1$. From Theorem A in \cite{K} ( we know that $\tilde R\geq R$ with equality if and only if $\Omega$ is isometric to $\tilde\Omega$. 
Now the first Robin eigenvalue of $\tilde\Omega$ is the first eigenvalue of the problem on $[0,\tilde R]$:
$$
\twosystem
{v''+\frac{\Theta'}{\Theta}v'+\lambda v=0}
{v'(0)=\sigma v(0), \quad v'(\tilde R)=0.}
$$
We compare this problem with problem \eqref{barproblem}, and as $\tilde R\geq R$ we see immediately from Lemma \ref{oned} that $\lambda_1(\bar\Omega,\sigma)\geq\lambda_1(\tilde\Omega,\sigma)$ with equality if and only if $\tilde R=R$. As $\lambda_1(\Omega,\sigma)
\geq \lambda_1(\bar\Omega,\sigma)$ we see that, a fortiori,  $\lambda_1(\Omega,\sigma)\geq \lambda_1(\tilde\Omega,\sigma)$ with equality iff $\tilde R=R$, that is, iff $\Omega$ is isometric to $\tilde\Omega$.




\end{document}